\newtheorem{theorem}{Theorem}[section]
\newtheorem{proposition}[theorem]{Proposition}
\newtheorem{conjecture}[theorem]{Conjecture}
\newtheorem{lemma}[theorem]{Lemma}
\newtheorem{corollary}[theorem]{Corollary}
\newtheorem{definition}[theorem]{Definition}
\theoremstyle{remark}
\newtheorem{remark}[theorem]{Remark}
\newtheorem{example}[theorem]{Example}
\newcommand{\alert}[1]{{\color{DarkGreen}\emph{#1}}}
\newcommand{\nc}[1]{N\!C_{#1}}
\newcommand{\gnc}[2]{N\!C_{#1}^{(#2)}}
\newcommand{\bnc}[2]{\overline{N\!C}_{#1}^{(#2)}}
\newcommand{\fix}[1]{\text{Fix}(#1)}
\newcommand{\cat}[2]{\text{Cat}^{(#2)}(#1)}
\newcommand{\colint}[2]{#1^{(#2)}}
\newcommand{\colref}[3]{\bigl(\!\bigl(\colint{#1}{0}\;\colint{#2}{#3}\bigr)\!\bigr)}
\newcommand{\tp}{\mathsf{T}}
\newcommand{\vb}{\mathbf{v}}
\newcommand{\ie}{\text{i.e.}\;}
\title[EL-Shellability and Noncrossing Partitions]{EL-Shellability and 
  Noncrossing Partitions Associated with Well-Generated Complex Reflection Groups}
\author{Henri M{\"u}hle}
\address{Fak. f{\"u}r Mathematik, Universit{\"a}t Wien, Oskar-Morgenstern-Platz 1, 1090 Vienna, Austria}
\email{henri.muehle@univie.ac.at}
\thanks{This work was funded by the FWF Research Grant No. Z130-N13.}
\keywords{Noncrossing partitions, m-divisible noncrossing partitions, Reflection groups, Well-generated complex reflection 
groups, EL-shellability, M{\"o}bius function, Order complex, Compatible reflection order}
\subjclass[2010]{20F55 (primary), and 06A07, 05E15 (secondary)}
\begin{document}

\allowdisplaybreaks
	
\begin{abstract}
	In this article we prove that the lattice of noncrossing partitions is EL-shellable when associated with the
	well-generated complex reflection group of type $G(d,d,n)$, for $d,n\geq 3$, or with the exceptional 
	well-generated complex reflection groups which are no real reflection groups. This result was previously 
	established for the real reflection groups and it can be extended to the well-generated complex reflection group of
	type $G(d,1,n)$, for $d,n\geq 3$, as well as to three exceptional groups, namely $G_{25},G_{26}$ and $G_{32}$, using
	a braid group argument. We thus conclude that the lattice of noncrossing partitions of any well-generated
	complex reflection group is EL-shellable. Using this result and a construction by Armstrong and Thomas, we 
	conclude further that the poset of $m$-divisible noncrossing partitions is EL-shellable for every well-generated 
	complex reflection group. Finally, we derive results on the M{\"o}bius function of these posets previously 
	conjectured by Armstrong, Krattenthaler and Tomie. 
\end{abstract}

\maketitle

\section{Introduction}
  \label{sec:introduction}
In a seminal paper \cite{kreweras72sur}, Kreweras investigated noncrossing set partitions under refinement order. 
They quickly became a popular research topic and many interesting connections to other mathematical branches,
such as algebraic combinatorics, group theory, topology, and representation theory, have been found. For a survey
on the connection of noncrossing partitions with these mathematical branches, see 
\cites{mccammond06noncrossing,simion00noncrossing}. Many of these connections were made possible by regarding 
noncrossing set partitions as elements of the intersection poset of the braid arrangement. This observation 
eventually allowed for associating analogous lattices with every well-generated complex reflection group $W$, which
we denote by $\nc{W}$, and we call their elements the \alert{$W$-noncrossing partitions}. Meanwhile, 
these noncrossing partitions have been further generalized to so-called 
\alert{$m$-divisible $W$-noncrossing partitions}, and the corresponding poset will be denoted by $\gnc{W}{m}$, see 
\cites{armstrong09generalized,bessis11cyclic}. Kreweras' initial objects are obtained as the special case where
$W$ is the symmetric group and $m=1$. 

The main purpose of this article is to prove that the lattice of noncrossing partitions associated with a well-generated
complex reflection group is EL-shellable. The fact that a poset is EL-shellable implies a number of algebraic,
topological and combinatorial properties. For instance the Stanley-Reisner ring associated with an EL-shellable poset
is Cohen-Macaulay. For further implications of EL-shellability, see the end of Section~\ref{sec:shellability} or
\cites{bjorner96shellable,bjorner97shellable}. In particular, we prove the following theorem.

\begin{theorem}\label{thm:shellability_main}
	The lattice $\nc{W}$ of $W$-noncrossing partitions is EL-shellable when $W=G(d,d,n)$ for $d,n\geq 3$, or when 
	$W$ is an exceptional well-generated complex reflection group which is not a real reflection group. 
\end{theorem}

We recall in Section~\ref{sec:reflection_groups} that there are three infinite families of irreducible 
well-generated complex reflection groups, namely $G(1,1,n)$ for $n\geq 1$, $G(d,1,n)$ for $n\geq 1,d\geq 2$, and 
$G(d,d,n)$ for $n,d\geq 2$, as well as 26 exceptional well-generated complex reflection groups. Among these, the
following groups are real reflection groups: 
\begin{itemize}
	\item the group $G(1,1,n)$ for $n\geq 1$ is isomorphic to the Coxeter group $A_{n-1}$,
	\item the group $G(2,1,n)$ for $n\geq 2$ is isomorphic to the Coxeter group $B_{n}$,
	\item the group $G(2,2,n)$ for $n\geq 4$ is isomorphic to the Coxeter group $D_{n}$,
	\item the group $G(d,d,2)$ for $d\geq 3$ is isomorphic to the Coxeter group $I_{2}(d)$,
	\item the group $G(2,2,3)$ is isomorphic to the Coxeter group $A_{3}$, and
	\item the group $G(2,2,2)$ is isomorphic to the reducible Coxeter group $A_{1}\times A_{1}$,
\end{itemize}
see \cite{lehrer09unitary}*{Example~2.11} or \cite{broue98complex}*{Tables~1~and~2}. Six of the 26 exceptional 
irreducible well-generated complex reflection groups are exceptional real reflection groups, and we will list them
in Section~\ref{sec:shellability_exceptional}. The EL-shellability of noncrossing partition lattices associated with 
real reflection groups has been proven (in a uniform way!) by Athanasiadis, Brady and Watt in 
\cite{athanasiadis07shellability}.

\begin{theorem}[\cite{athanasiadis07shellability}*{Theorem~1.1}]\label{thm:shellability_real}
	The lattice $\nc{W}$ is EL-shellable for every real reflection group $W$.
\end{theorem}

If we concatenate Theorems~\ref{thm:shellability_main} and \ref{thm:shellability_real}, and exploit a fact about
the braid groups of the groups $G(d,1,n)$, for $d,n\geq 3$, and $G_{25},G_{26},G_{32}$, then we obtain the following result.

\begin{theorem}\label{thm:shellability_all}
	The lattice $\nc{W}$ is EL-shellable for every well-generated complex reflection group $W$.
\end{theorem}

It turns out that the main obstacle for a uniform proof of Theorem~\ref{thm:shellability_all} is not so much the definition of
an edge-labeling, but more the definition of a suitable total order on the reflections of $W$. A natural edge-labeling of 
$\nc{W}$ follows almost instantly from the group structure of $W$ and the definition of the partial order on $\nc{W}$. The proofs
of Theorems~\ref{thm:shellability_main} and \ref{thm:shellability_real} use this labeling and mainly deal with the definition of a 
total order of the reflections below a fixed Coxeter element $\gamma\in W$, a so-called \alert{$\gamma$-compatible reflection order}.
While in the case of real reflection groups such a reflection order can be defined uniformly, its existence remains case-by-case for 
the complex reflection groups. Moreover, in the case of real reflection groups, some instances of such an order benefit greatly 
from certain properties of the root systems associated with these groups, properties that cannot be generalized to complex 
reflection groups either. We refer the reader to Section~\ref{sec:uniform} or to \cite{muehle14on} for further information on 
$\gamma$-compatible reflection orders.

Another important aspect of Theorem~\ref{thm:shellability_all} is the connection of EL-shellability of $\nc{W}(\gamma)$ with the 
transitivity of the Hurwitz action on the set of reduced $T$-words of $\gamma$, see \cite{bessis14finite}*{Proposition~7.5}. It is 
an intriguing question whether we can derive the EL-shellability of $\nc{W}$ directly from the transitivity of this action or vice versa.

Once we have established Theorem~\ref{thm:shellability_all}, we use a construction of Armstrong and Thomas from
\cite{armstrong09generalized} to show the following, more general result.

\begin{theorem}\label{thm:generalized_shellability_all}
	Let $m\in\mathbb{N}$, and let $W$ be a well-generated complex reflection group. Denote by $\gnc{W}{m}$ the 
	poset of $m$-divisible $W$-noncrossing partitions, and let $\bnc{W}{m}$ be the lattice that 
	arises from $\gnc{W}{m}$ by adjoining a least element. Then $\bnc{W}{m}$ is EL-shellable.
\end{theorem}

\smallskip

This article is organized as follows. In Section~\ref{sec:preliminaries}, we give background information on complex
reflection groups, noncrossing partitions and EL-shellability. In Section~\ref{sec:gddn} we recall some basic facts
about the complex reflection groups $G(d,d,n)$ for $d,n\geq 3$, and prove the EL-shellability of $\nc{G(d,d,n)}$.
For the exceptional well-generated complex reflection groups, we explicitly construct an EL-labeling of the corresponding 
lattice of noncrossing partitions with the help of a computer program in Section~\ref{sec:shellability_exceptional}, and
thus conclude the proofs of Theorems~\ref{thm:shellability_main} and \ref{thm:shellability_all}. In 
Section~\ref{sec:shellability_generalized}, we briefly recall the construction of the EL-labeling of $\bnc{W}{m}$
given by Armstrong and Thomas in \cite{armstrong09generalized}*{Section~3.7} for the case where $W$ is a real 
reflection group, and conclude the proof of Theorem~\ref{thm:generalized_shellability_all}. Eventually, in 
Section~\ref{sec:application}, we present some applications of Theorem~\ref{thm:generalized_shellability_all}
concerning the M{\"o}bius function of $\gnc{W}{m}$ that were previously conjectured in
\cite{armstrong09euler} and \cite{tomie09mobius}. 

\section{Preliminaries}
  \label{sec:preliminaries}
In this section, we provide definitions and background for the concepts treated in this article. For a more detailed
introduction to complex reflection groups, we refer to \cite{lehrer09unitary}. EL-shellability of partially ordered
sets was introduced in \cite{bjorner80shellable}. More details and examples can be found there.

\subsection{Complex Reflection Groups}
  \label{sec:reflection_groups}
Let $V$ be an $n$-dimensional complex vector space and let $w\in U(V)$ be a unitary transformation on $V$. Define the
\alert{fixed space $\fix{w}$ of $w$} as the set of all vectors in $V$ that remain invariant under the action of $w$.
A unitary transformation $w$ is called a \alert{reflection} if it has finite order and the corresponding fixed space 
has codimension $1$. Hence $\fix{w}$ is a hyperplane in $V$, the so-called \alert{reflection hyperplane of $w$}. 
A finite subgroup $W\leq U(V)$ that is generated by unitary reflections is called a unitary reflection group or---
as we say throughout the rest of the article---a \alert{complex reflection group}. A complex reflection group is called
\alert{irreducible} if it does not fix a proper subspace of $V$. The \alert{rank of $W$} is the dimension of the complement 
of the fixed space $V^{W}$ in $V$. We say that $W$ is \alert{well-generated} if $W$ has rank $n$ and can be generated 
by $n$ reflections. 

According to Shephard and Todd's classification of finite irreducible complex reflection groups, see
\cite{shephard54finite}, there is one infinite family of such reflection groups, denoted by $G(d,e,n)$ with $d,e,n$
being positive integers such that $n$ is the rank of the group, and such that
$e$ divides $d$, as well as $34$ exceptional groups, denoted by $G_{4},G_{5},\ldots,G_{37}$. 
We call a square matrix having exactly one non-zero entry in each row and in each column a \alert{monomial matrix}. 
The group $G(d,e,n)$ admits a representation as a group of monomial $(n\times n)$-matrices in which each non-zero entry is 
a $d$-th root of unity and the product of all non-zero entries is a $\tfrac{d}{e}$-th root of unity, and we will 
refer to this representation as the \alert{standard monomial representation of $G(d,e,n)$}. See 
\cite{lehrer09unitary}*{Chapter~2.2} for the exact definition.

We can conclude from \cite{orlik80unitary}*{Table~2} that there are three infinite 
families of irreducible well-generated complex reflection groups, namely $G(1,1,n)$ for $n\geq 1$, $G(d,1,n)$ for
$n\geq 1,d\geq 2$, and $G(d,d,n)$ for $n,d\geq 2$. Among the $34$ exceptional irreducible complex reflection groups,
$26$ are well-generated, and we list them in Section~\ref{sec:shellability_exceptional}.

For every complex reflection group $W$ of rank $n$ there is a set of algebraically independent polynomials
$\sigma_{1},\sigma_{2},\ldots,\sigma_{n}\in\mathbb{C}[X_{1},X_{2},\ldots,X_{n}]$ that remain invariant under the 
action of $W$. The degrees of these polynomials are called the \alert{degrees of $W$}, and they are independent
of the actual choice of invariants, see \cite{chevalley55invariants}. They have a close connection to the structure of $W$. 
Namely, the product of the degrees equals the group order and their sum equals the number of reflections of $W$ plus $n$, see 
\cite{lehrer09unitary}*{Theorem~4.14}. 

\subsection{Regular Elements and Noncrossing Partitions}
  \label{sec:noncrossing_partitions}
In this section, we define the objects of interest for this article. Let $T=\{t_{1},t_{2},\ldots,t_{N}\}$ be the
set of all reflections of $W$. Since $W$ is generated by $T$, we can write every $w\in W$ as a product of reflections. 
This gives rise to a length function $\ell_{T}$ that assigns to every $w\in W$ the least number of reflections that 
are needed to form $w$. More formally, 
\begin{equation}\label{eq:length_function}
	\ell_{T}:W\to\mathbb{N},\quad 
	 w\mapsto\min\{k\mid w=t_{i_{1}}t_{i_{2}}\cdots t_{i_{k}},\;\text{where}\;1\leq i_{1},i_{2},\ldots,i_{k}\leq N\}.
\end{equation}
If $\ell_{T}(w)=k$, then we call every product of $k$ reflections that yields $w$ a \alert{reduced $T$-word of $w$}.
The reduced $T$-words of $w\in W$ can be transformed into one another as follows.

\begin{lemma}\label{lem:shifting}
	Let $W$ be a complex reflection group, and let $w\in W$ with a given reduced $T$-word $w=t_{1}t_{2}\cdots t_{k}$. For 
	every $i\in\{1,2,\ldots,k-1\}$ we have 
	\begin{displaymath}	
		w = t_{1}t_{2}\cdots t_{i-1}t_{i+1}(t_{i+1}^{-1}t_{i}t_{i+1})t_{i+2}\cdots t_{k} 
	\end{displaymath}
	is again a reduced $T$-word of $w$. Analogously for every $i\in\{2,3,\ldots,k\}$ we have 
	\begin{displaymath}
		w = t_{1}t_{2}\cdots t_{i-1}(t_{i}t_{i+1}t_{i}^{-1})t_{i}t_{i+2}\cdots t_{k}
	\end{displaymath}
	is again a reduced $T$-word of $w$.
\end{lemma}
\begin{proof}
	This follows immediately from the fact that conjugating a reflection yields a reflection again, see
	\cite{lehrer09unitary}*{Lemma~1.9}.
\end{proof}

With the help of the previously introduced length function, we can now define a partial order on $W$, the so-called 
\alert{absolute order of $W$}, by 
\begin{equation}\label{eq:absolute_order}
	u\leq_{T}v\quad\text{if and only if}\quad\ell_{T}(w)=\ell_{T}(u)+\ell_{T}(u^{-1}v).
\end{equation}
However, we are not interested in the whole poset $(W,\leq_{T})$, but in certain intervals thereof. To determine these
intervals, we need some more notation. Denote by $V$ the complex vector space on which $W$ acts. A vector 
$\mathbf{v}\in V$ is called \alert{regular} if it does not lie in any of the reflecting hyperplanes of $W$. If the eigenspace
to an eigenvalue $\zeta$ of $w\in W$ contains a regular vector, then $w$ is called \alert{$\zeta$-regular}, and 
the multiplicative order $d$ of $\zeta$ is called a \alert{regular number for $W$}. It follows from 
\cite{springer74regular}*{Theorem~4.2} that $\zeta$-regular elements form a single conjugacy class in $W$. 

It follows from \cite{lehrer99reflection}*{Theorem~C} that the largest degree, $d_{n}$, is a regular number for every well-generated complex
reflection group. 
If $\zeta$ is a $d_{n}$-th root of unity, then we call a $\zeta$-regular element of order $d_{n}$ a 
\alert{Coxeter element of $W$}, see \cite{reiner14on}*{Definition~1.1}, provided that $W$ is well-generated. The largest degree of 
$W$ is then called the \alert{Coxeter number of $W$}, and we usually write $h$ instead of $d_{n}$.

Now, let $W$ be an irreducible well-generated complex reflection group, let $\varepsilon\in W$ denote the identity of $W$, 
and let $\gamma\in W$ be a Coxeter element. We call the interval $[\varepsilon,\gamma]$ of $(W,\leq_{T})$ the 
\alert{lattice of $W$-noncrossing partitions}, and we denote it by $\nc{W}(\gamma)$. The following statement 
implies that $\nc{W}(\gamma)$ does not depend on the choice of $\gamma$.

\begin{proposition}[\cite{reiner14on}*{Corollary~1.6}]\label{prop:nc_independent}
	Let $W$ be an irreducible, well-generated complex reflection group, and let $\gamma,\gamma'\in W$ be two Coxeter elements.
	Then the posets $\nc{W}(\gamma)$ and $\nc{W}(\gamma')$ are isomorphic.
\end{proposition}

In view of Proposition~\ref{prop:nc_independent} we will suppress the chosen Coxeter element when it is not 
necessary, and write only $\nc{W}$. 

The fact that $\nc{W}$ is indeed a lattice for every irreducible well-generated complex reflection group was 
shown (case-by-case) in a series of papers, see 
\cites{bessis03dual,bessis06non,bessis14finite,brady01partial,brady02artin,brady08lattices}. (In \cite{brady01partial} it was 
shown that $\nc{G(1,1,n)}$ is a lattice, in \cite{brady02artin} it was shown that $\nc{G(2,1,n)}$ and $\nc{G(2,2,n)}$ are
lattices. In \cite{bessis03dual} the noncrossing partition lattices of all real reflection groups were considered. This
construction was extended to the groups $G(d,d,n)$ in \cite{bessis06non}, and later, in \cite{bessis14finite}, to all 
well-generated complex reflection groups. \cite{brady08lattices} provides
a uniform proof of the lattice property of $\nc{W}$ in the case where $W$ is a real reflection group, while a 
uniform proof of the lattice property of $\nc{W}$, where $W$ is a complex reflection group, has not yet appeared.)
It was also shown that $\nc{W}$ enjoys a number of beautiful properties: it is for instance graded, atomic, self-dual, 
locally self-dual, and complemented.

In \cite{armstrong09generalized}, Armstrong introduced a more general poset that he called the 
\alert{poset of $m$-divisible $W$-noncrossing partitions}, for some positive integer $m$. Given a 
Coxeter element $\gamma\in W$, this poset is defined by
\begin{multline*}
	\gnc{W}{m}(\gamma)=\left\{\vphantom{\sum_{i=0}^{m}}(w_{0};w_{1},\ldots,w_{m})\in\nc{W}^{m+1}\right\rvert\\
		\left.\gamma=w_{0}w_{1}\cdots w_{m}\;\text{and}\;\sum_{i=0}^{m}{\ell_{T}(w_{i})}=\ell_{T}(\gamma)\right\},
\end{multline*}
where the corresponding partial order is defined by
\begin{displaymath}
	(u_{0};u_{1},\ldots,u_{m})\leq(v_{0};v_{1},\ldots,v_{m})\quad\text{if and only if}\quad
	  v_{i}\leq_{T}u_{i}\;\text{for all}\;1\leq i\leq m.
\end{displaymath}
It turns out that $\Bigl(\gnc{W}{m}(\gamma),\leq\Bigr)$ is graded with rank function 
$\text{rk}(w_{0};w_{1},\ldots,w_{m})=\ell_{T}(w_{0})$, and has a greatest element 
$(\gamma;\varepsilon,\ldots,\varepsilon)$. In general, however, this poset has no least element. Again 
Proposition~\ref{prop:nc_independent} implies that $\gnc{W}{m}(\gamma)$ does not depend on the choice of $\gamma$, and we 
will thus drop the Coxeter element from the notation whenever it is not necessary.

Although Armstrong considered only real reflection groups, the same construction can be carried out in the general setting of
well-generated complex reflection groups, see \cite{bessis11cyclic}. Not surprisingly, the case $m=1$ yields the 
noncrossing partition lattice $\nc{W}(\gamma)$ as defined in the previous paragraph. By theorems of several authors
\cites{athanasiadis04noncrossing,bessis14finite,bessis06non,chapoton04enumerative,edelman80chain,reiner97non} it
follows that for every irreducible well-generated complex reflection group $W$ and every positive integer $m$, we have
\begin{equation}\label{eq:fuss_catalan}
	\Bigl\lvert\gnc{W}{m}\Bigr\rvert=\prod_{i=1}^{n}{\frac{mh+d_{i}}{d_{i}}},
\end{equation}
where the numbers $d_{i}$ denote the degrees of $W$ in nondecreasing order, and $h$ is the Coxeter number of $W$. 
The numbers appearing in \eqref{eq:fuss_catalan} are called \alert{Fu{\ss}-Catalan numbers of $W$}, and will be denoted by 
$\cat{W}{m}$.

\subsection{EL-Shellability of Graded Posets}
  \label{sec:shellability}
Let $(P,\leq_{P})$ be a finite graded poset. We call $(P,\leq_{P})$ \alert{bounded} if it has a least and a greatest
element. A chain $\mathbf{c}:x=p_{0}<_{P}p_{1}<_{P}\cdots<_{P}p_{k}=y$ in some interval $[x,y]$ of $(P,\leq_{P})$ 
is \alert{maximal} if there are no $q\in P$ and no $i\in\{0,1,\ldots,k-1\}$ such that $p_{i}<_{P}q<_{P}p_{i+1}$.
Denote by $\mathcal{E}(P)$ the set of edges in the Hasse diagram of $(P,\leq_{P})$. Given another poset 
$(\Lambda,\leq_{\Lambda})$, a map $\lambda:\mathcal{E}(P)\to\Lambda$ is called an 
\alert{edge-labeling of $(P,\leq_{P})$}. Let 
$\lambda(\mathbf{c})=\bigl(\lambda(p_{0},p_{1}),\lambda(p_{1},p_{2}),\ldots,\lambda(p_{k-1},p_{k})\bigr)$ denote the 
sequence of edge labels of $\mathbf{c}$ with respect to $\lambda$. We write $\lambda\bigl([x,y]\bigr)$ for the set
of sequences $\lambda(\mathbf{c})$, where $\mathbf{c}$ runs over all maximal chains in $[x,y]$. 

A maximal chain $\mathbf{c}$ in $[x,y]$ is 
\alert{increasing} if $\lambda(\mathbf{c})$ is a strictly increasing sequence. For another maximal chain 
$\mathbf{c'}:x=q_{0}<_{P}q_{1}<_{P}\cdots<_{P}q_{k}=y$ in the same interval, we say that 
\alert{$\mathbf{c}$ is lexicographically smaller than $\mathbf{c'}$} if $\lambda(\mathbf{c})$ is smaller than 
$\lambda(\mathbf{c'})$ with respect to the lexicographic order on $\Lambda^{k}$. If $\lambda$ is an edge-labeling
such that for every interval of $(P,\leq_{P})$ there exists a unique increasing maximal chain which is lexicographically
first among all maximal chains in this interval, then we call $\lambda$ an \alert{EL-labeling of $(P,\leq_{P})$}. 
A bounded, graded poset that admits an EL-labeling is called \alert{EL-shellable}. Recall the following result.

\begin{theorem}[\cite{bjorner80shellable}*{Theorem~4.3}]\label{thm:direct_product_shellable}
	Let $P,Q$ be bounded, graded posets. The direct product $P\times Q$ is EL-shellable if and only if both $P$ and
	$Q$ are EL-shellable.
\end{theorem}

EL-shellability of posets was introduced by Bj{\"o}rner in \cite{bjorner80shellable} as an order-theoretic tool to 
prove a conjecture by Stanley stating that a certain class of lattices is Cohen-Macaulay. This was obtained by showing
that the desired class of lattices is EL-shellable and that EL-shellability implies shellability which in turn 
implies Cohen-Macaulayness. Subsequently, EL-shellability turned out to be a powerful tool to investigate the
topological properties of posets. It was for instance shown that the number of falling maximal chains in an interval
$[x,y]$ of an EL-shellable poset $(P,\leq_{P})$ (with respect to the EL-labeling) equals $\mu(x,y)$, where $\mu$
denotes the M{\"o}bius function of $(P,\leq_{P})$. Using this connection, it is also possible to determine the Euler
characteristic and thus the homotopy type of the order complex associated with $(P,\leq_{P})$. 

\section{EL-Shellability of $\nc{G(d,d,n)}$ for $d,n\geq 3$}
  \label{sec:gddn}
In this section we prove that $\nc{G(d,d,n)}$ is EL-shellable for $d,n\geq 3$. In order to do that we fix a particular 
Coxeter element $\gamma$, see \eqref{eq:coxeter_gddn}, and define a total order on the set of reflections below $\gamma$ 
in $\nc{G(d,d,n)}(\gamma)$, see \eqref{eq:order_gddn}.
Subsequently we show that the natural labeling of $\nc{G(d,d,n)}(\gamma)$, denoted by $\lambda_{\gamma}$ that assigns to 
each cover relation $(u,v)$ in $\nc{G(d,d,n)}(\gamma)$ the unique reflection $u^{-1}v$ is an EL-labeling of 
$\nc{G(d,d,n)}(\gamma)$, see Theorem~\ref{thm:shellability_gddn} below. The proof proceeds by induction on $n$. First we 
show that $\lambda_{\gamma}$ is an EL-labeling for all possible intervals of length $2$, see Lemma~\ref{lem:gddn_rank_2} 
below. Subsequently we show the same for all possible intervals of length $n-1$ in $\nc{G(d,d,n)}(\gamma)$, see 
Propositions~\ref{prop:gddn_symmetric_irreducible} and \ref{prop:gddn_gend_irreducible} as well as
Corollaries~\ref{cor:gddn_symmetric_reducible} and \ref{cor:gddn_gend_reducible}. The proofs of these intermediate 
steps are quite technical, and we thus only present an outline in the text, and refer the reader to the appendix for all the 
details. Finally, we use these results to establish that $\lambda_{\gamma}$ is an EL-labeling of $\nc{G(d,d,n)}(\gamma)$.

\subsection{The Setup}
  \label{sec:gddn_setup}
First of all, we recall that in the standard monomial representation of $G(d,d,n)$, the elements of $G(d,d,n)$ are
monomial matrices whose nonzero entries are $d$-th roots of unity, and the product of all nonzero entries is $1$.
Thus $G(d,d,n)$ can be seen as a subgroup of the symmetric group $\mathfrak{S}_{dn}$, acting on the set
\begin{displaymath}
	\Bigl\{\colint{1}{0},\colint{2}{0},\ldots,\colint{n}{0},\colint{1}{1},\colint{2}{1},\ldots,\colint{n}{1},\ldots,
	  \colint{1}{d-1},\colint{2}{d-1},\ldots,\colint{n}{d-1}\Bigr\}
\end{displaymath}
of integers with $d$ colors such that $w\in G(d,d,n)$ satisfies 
\begin{displaymath}
	w\bigl(\colint{k}{s}\bigr)=\colint{\pi(k)}{s+t_{k}}\quad\text{and}\quad\sum_{i=1}^{k}{t_{k}}\equiv 0\pmod{d},
\end{displaymath}
where $\pi\in\mathfrak{S}_{n}$, and the numbers $t_{k}$ depend only on $w$ and $k$. (The addition in the
superscript is understood modulo $d$.) Thus the elements of $G(d,d,n)$ can be decomposed into cycles of the 
following form:
\begin{multline*}
	\bigl(\!\bigl(\colint{k_{1}}{t_{1}}\;\colint{k_{2}}{t_{2}}\;\ldots\;\colint{k_{r}}{t_{r}}\bigr)\!\bigr) = 
	  \bigl(\colint{k_{1}}{t_{1}}\;\colint{k_{2}}{t_{2}}\;\ldots\;\colint{k_{r}}{t_{r}}\bigr)\\
	  \bigl(\colint{k_{1}}{t_{1}+1}\;\colint{k_{2}}{t_{2}+1}\;\ldots\;\colint{k_{r}}{t_{r}+1}\bigr)\cdots
	  \bigl(\colint{k_{1}}{t_{1}+d-1}\;\colint{k_{2}}{t_{2+d-1}}\;\ldots\;\colint{k_{r}}{t_{r}+d-1}\bigr)
\end{multline*}
and 
\begin{multline*}
	\bigl[\colint{k_{1}}{t_{1}}\;\colint{k_{2}}{t_{2}}\;\ldots\;\colint{k_{r}}{t_{r}}\bigr]_{s} =
	  \bigl(\colint{k_{1}}{t_{1}}\;\colint{k_{2}}{t_{2}}\;\ldots\;\colint{k_{r}}{t_{r}}\\
	  \colint{k_{1}}{t_{1}+s}\;\colint{k_{2}}{t_{2}+s}\;\ldots\;\colint{k_{r}}{t_{r}+s}\;
	  \ldots\;\colint{k_{1}}{t_{1}+(d-1)s}\;\colint{k_{2}}{t_{2}+(d-1)s}\;\ldots\;\colint{k_{r}}{t_{r}+(d-1)s}\bigr),
\end{multline*}
Usually, we will simply write $\bigl[\colint{k_{1}}{t_{1}}\;\colint{k_{2}}{t_{2}}\;\ldots\;\colint{k_{r}}{t_{r}}\bigr]$ 
instead of $\bigl[\colint{k_{1}}{t_{1}}\;\colint{k_{2}}{t_{2}}\;\ldots\;\colint{k_{r}}{t_{r}}\bigr]_{1}$.

\subsection{Parabolic Subgroups}
  \label{sec:gddn_parabolic}
Let $V$ denote the complex vector space on which $G(d,d,n)$ acts. We call the maximal subgroup of $G(d,d,n)$ that 
fixes some $A\subseteq V$ pointwise a \alert{parabolic subgroup of $G(d,d,n)$}. 

\begin{lemma}\label{lem:parabolic_subgroups}
	Let $W$ be a parabolic subgroup of $G(d,d,n)$. If $W$ is irreducible, then $W$ is either isomorphic to
	$G(1,1,n')$ or to $G(d,d,n')$ for $n'\leq n$. If $W$ is reducible, then $W$ is isomorphic to a direct product
	of irreducible parabolic subgroups of $G(d,d,n)$. 
\end{lemma}
\begin{proof}
	This follows from \cite{broue98complex}*{Fact~1.7} and \cite{broue98complex}*{Table~2}.
\end{proof}

The following property of Coxeter elements in well-generated complex reflection groups was observed by Ripoll.
\begin{proposition}[\cite{ripoll10orbites}*{Proposition~6.3(i),(ii)}]\label{prop:parabolic_coxeter_elements}
	Let $W$ be a well-generated complex reflection group, and let $w\in W$. Let $T$ denote the set of all reflections
	of $W$. The following are equivalent:
	\begin{enumerate}[(i)]
		\item $w$ is a Coxeter element in a parabolic subgroup of $W$, \quad and
		\item there is a Coxeter element $\gamma_{w}\in W$ such that $w\leq_{T}\gamma_{w}$.
	\end{enumerate}
\end{proposition}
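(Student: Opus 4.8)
The plan is to establish the two implications separately, working throughout with the dictionary between reduced reflection factorizations and fixed spaces that \eqref{eq:length_fixed} makes available. For the direction (ii)$\Rightarrow$(i), suppose $w\leq_T\gamma_w$ for some Coxeter element $\gamma_w$ of $W$ and put $A=\fix{w}$. Choosing a reduced $T$-word $w=t_1t_2\cdots t_k$, I would first show that $\fix{w}=\bigcap_{i=1}^{k}\fix{t_i}$: the inclusion $\fix{w}\supseteq\bigcap_i\fix{t_i}$ is immediate since a vector fixed by every $t_i$ is fixed by their product, and comparing codimensions closes the gap. Indeed $\operatorname{codim}\bigcap_i\fix{t_i}\leq\sum_i\operatorname{codim}\fix{t_i}=k$, while $k=\ell_T(w)=n-\dim\fix{w}=\operatorname{codim}\fix{w}\leq\operatorname{codim}\bigcap_i\fix{t_i}$, forcing equality of dimensions and hence of the subspaces. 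Consequently each $t_i$ fixes $A$ pointwise, so by Steinberg's theorem recalled above each $t_i$ lies in the parabolic subgroup $W_A$; therefore $w\in W_A$ and $\ell_{T_A}(w)=k=\operatorname{rank}(W_A)$, i.e. $w$ has no nonzero fixed vector in the space on which $W_A$ acts essentially.

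The decisive step is then to upgrade ``$w$ has full length in $W_A$ and is a factor of a Coxeter element of $W$'' to ``$w$ is a \emph{Coxeter element} of $W_A$.'' Since $W_A$ is itself a well-generated complex reflection group, it possesses a largest degree and a well-defined notion of Coxeter element, so it suffices to exhibit $w$ as a regular element of $W_A$ whose order equals that largest degree. Here I would invoke Springer's regularity theory (\cite{springer74regular}*{Theorem~4.2}): the Coxeter element $\gamma_w$ is $\zeta$-regular for a primitive $d_n$-th root of unity, and the relationship between the eigenspaces of $\gamma_w$ and the flat $A$ realizes $w$ as a regular element of $W_A$ of the required order. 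This is exactly where the argument is hardest, and where the absence of a usable root system in the complex setting (unlike the real case of \cite{athanasiadis07shellability}) becomes the genuine obstacle.

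For the converse (i)$\Rightarrow$(ii), let $w$ be a Coxeter element of a parabolic subgroup $W_A$. Because Coxeter elements have trivial fixed space in their essential representation—$\ell_T$ of a Coxeter element equals the full rank, so its fixed space is $0$ there—we get $\fix{w}=A$ and $\ell_T(w)=\operatorname{codim}A$ in $W$. I would then extend a regular eigenvector for $w$ in $W_A$ to a regular eigenvector for all of $W$, producing a Coxeter element $\gamma_w$ of $W$ that restricts to $w$ off $A$; additivity of $\ell_T$ then yields $w\leq_T\gamma_w$. The main obstacle in both directions is the same: the correspondence between parabolic subgroups and factors of the Coxeter element cannot be run through positive systems and induced subsystems as in the real case, and must instead be routed through the Springer/Lehrer--Michel regular-element machinery. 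This is precisely the content organized in \cite{ripoll10orbites}, so rather than reconstruct that apparatus I would quote the cited result, having reduced the equivalence to the regularity statement above.
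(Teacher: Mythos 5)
The paper offers no proof of this proposition at all: it is imported verbatim as \cite{ripoll10orbites}*{Proposition~6.3(i),(ii)}, and your proposal, after a partial sketch, likewise concludes by quoting that same reference for the decisive steps (upgrading a full-length element of the parabolic to a Coxeter element, and extending a Coxeter element of the parabolic to one of $W$). Your preliminary reduction via $\fix{w}=\bigcap_i\fix{t_i}$ and Steinberg's theorem is sound and correctly locates where the real difficulty lies, so in effect you take the same route as the paper.
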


We call $w$ a \alert{parabolic Coxeter element} if it satisfies one of the properties stated in 
Proposition~\ref{prop:parabolic_coxeter_elements}. Analogously to real reflection groups, the length of a 
parabolic Coxeter element of $G(d,d,n)$ is determined by the codimension of its fixed space. 

\begin{lemma}[\cite{bessis06non}*{Lemma~4.1(ii)}]\label{lem:gddn_length_fixed_space}
	For $w\in\nc{G(d,d,n)}$, we have $\ell_{T}(w)=n-\dim\fix{w}$.
\end{lemma}

\subsection{Reflections and Coxeter Element}
  \label{sec:gddn_reflections_coxeter}
One of the major differences between real and complex reflection groups is the fact that real reflections are involutions, 
while complex reflections may have order $>2$. The following proposition shows that $G(d,d,n)$ is well-behaved with 
respect to this aspect. 

\begin{proposition}[\cite{lehrer09unitary}*{Proposition~2.9}]\label{prop:reflections_gddn}
	The group $G(d,d,n)$ contains $d\tbinom{n}{2}$ reflections and the order of every reflection is two.
\end{proposition}

Let us have a closer look at the standard monomial representation of the reflections of $G(d,d,n)$: since they are unitary 
involutions that fix a space of codimension $1$, it follows immediately that we have
\begin{equation}\label{eq:gddn_reflections}
	T = \Bigl\{\colref{a}{b}{s}\mid 1\leq a<b\leq n,0\leq s<d\Bigr\}.
\end{equation}	
Now let us emphasize a certain subset of $T$, namely the reflections
\begin{displaymath}
	\colref{1}{2}{0},\colref{2}{3}{0},\ldots,\colref{(n\!-\!1)}{n}{0},\colref{(n\!-\!1)}{n}{1},
\end{displaymath}
which we call the \alert{simple reflections of $G(d,d,n)$}, and which we abbreviate by 
$s_{i}=\colref{i}{(i\!+\!1)}{0}$ for $1\leq i<n$ and $s_{n}=\colref{(n\!-\!1)}{n}{1}$. Their product 
$\gamma=s_{1}s_{2}\cdots s_{n}$ is the group element
\begin{equation}\label{eq:coxeter_gddn}
	\gamma=\bigl[\colint{1}{0}\;\colint{2}{0}\;\ldots\;\colint{(n\!-\!1)}{0}\bigr]\bigl[\colint{n}{0}\bigr]_{d-1},
\end{equation}
which can be represented by the monomial matrix
\begin{equation}\label{eq:coxeter_gddn_matrix}
	C=\left(\begin{array}{ccccccc}
	  0 & 0 & 0 & \cdots & 0 & \zeta_{d} & 0\\
	  1 & 0 & 0 & \cdots & 0 & 0 & 0 \\
	  0 & 1 & 0 & \cdots & 0 & 0 & 0 \\
	  \vdots & \vdots & \vdots & & \vdots & \vdots & \vdots\\
	  0 & 0 & 0 & \cdots & 1 & 0 & 0 \\
	  0 & 0 & 0 & \cdots & 0 & 0 & \zeta_{d}^{d-1}\\
	\end{array}\right),
\end{equation}
where $\zeta_{d}=e^{2\pi\sqrt{-1}/d}$ is a $d$-th root of unity. Recall for instance from 
\cite{orlik80unitary}*{Table~2} that the degrees of $G(d,d,n)$ are
\begin{equation}\label{eq:degrees_gddn}
	d,2d,\ldots,(n-1)d,n, 
\end{equation}
and hence that the Coxeter number of $G(d,d,n)$ is $h=(n-1)d$. We can check that $\zeta_{h}$ is an eigenvalue
of $C$, and an eigenvector of $C$ to $\zeta_{h}$ is for instance
\begin{equation}\label{eq:regular_vector}
	\vb=\left(\begin{array}{ccccc}\zeta_{h}^{n-1} & \zeta_{h}^{n-2} & \ldots & \zeta_{h} & 0\end{array}\right)^{\tp},
\end{equation}
where ``$\tp$'' denotes the transposition of vectors. The reflection hyperplanes of $G(d,d,n)$ (in standard monomial
representation) are given by the equations
\begin{displaymath}
	x_{i}=\zeta_{d}^{s}x_{j},\quad\mbox{for}\;1\leq i<j\leq n\;\text{and}\;0\leq s<d.
\end{displaymath}
Hence the vector $\vb$ from \eqref{eq:regular_vector} is indeed $\zeta_{h}$-regular, which makes $\gamma$ a 
Coxeter element of $G(d,d,n)$. For later use, we refer to the reduced $T$-word
\begin{equation}\label{eq:factors_gddn}
	\gamma=\colref{1}{2}{0}\colref{2}{3}{0}\cdots\colref{(n\!-\!1)}{n}{0}\colref{(n\!-\!1)}{n}{1},
\end{equation}
as the \alert{simple decomposition of $\gamma$}. From now on, whenever we write $\nc{G(d,d,n)}$ we actually mean 
$\nc{G(d,d,n)}(\gamma)$, \ie the interval $[\varepsilon,\gamma]$ in $\bigl(G(d,d,n),\leq_{T}\bigr)$ for the Coxeter
element $\gamma$ from \eqref{eq:coxeter_gddn}.

\begin{remark}\label{rem:sym_subgroup}
	If we consider the subword 
	$\bar{\gamma}=\gamma s_{n}=\bigl(\!\bigl(\colint{1}{0}\;\colint{2}{0}\;\ldots\;\colint{n}{0}\bigr)\!\bigr)$, then we obtain 
	a reduced $T$-word
	\begin{equation}\label{eq:factors_g11n}
		\bar{\gamma}=\colref{1}{2}{0}\colref{2}{3}{0}\cdots\colref{(n\!-\!1)}{n}{0},
	\end{equation}
	which we will refer to as the \alert{simple decomposition of $\bar{\gamma}$}. More precisely, it can be checked that
	$\bar{\gamma}$ is a Coxeter element in the parabolic subgroup $G(1,1,n)$ (which has rank $n-1$) of $G(d,d,n)$, and thus
	we call the reflections $s_{1},s_{2},\ldots,s_{n-1}$ the \alert{simple reflections of $G(1,1,n)$}. Indeed, there is an 
	obvious bijection between the set $\{s_{1},s_{2},\ldots,s_{n-1}\}$ and the set of transpositions 
	$\bigl\{(1\;2),(2\;3),\ldots,(n\!-\!1\;n)\bigr\}$ which forms a set of canonical generators for the symmetric group
	$\mathfrak{S}_{n}$.
\end{remark}

Another difference between real and complex reflection groups is that in a complex reflection group, not all 
reflections have to be comparable (with respect to the absolute order) to a given Coxeter element\footnote{I thank an anonymous
referee for pointing out to me that there are also cases of infinite real reflection groups in which not all reflections are 
comparable to a given Coxeter element. See for instance \cite{mccammond13dual}*{Theorem~9.6}.}. Let us therefore define 
$T_{\gamma}=\{t\in T\mid t\leq_{T}\gamma\}$. 

\begin{proposition}
  \label{prop:gamma_reflections}
	Let $\gamma$ be the Coxeter element of $G(d,d,n)$ as defined in \eqref{eq:coxeter_gddn}. Then we have
	\begin{align*}
		T_{\gamma} & = \Bigl\{\colref{a}{b}{s}\mid 1\leq a<b<n,s\in\{0,d-1\}\Bigr\}\\
			& \kern1cm\cup\Bigl\{\colref{a}{n}{s}\mid 1\leq a<n,0\leq s\leq d-1\Bigr\}.
	\end{align*}
\end{proposition}
\begin{proof}
	Proposition~\ref{prop:reflections_gddn} implies that the reflections of $G(d,d,n)$ are involutions, and by 
	definition we have $\ell_{T}(t)=1$ if and only if $t\in T$. Hence it follows from 
	Lemma~\ref{lem:gddn_length_fixed_space} that $t\leq_{T}\gamma$ if and only if $\dim\fix{t\gamma}=1$.

	Let $\vb=(v_{1},v_{2},\ldots,v_{n})^{\tp}\in\mathbb{C}^{n}$ be an arbitrary vector. Then we have
	\begin{equation}\label{eq:coxeter_action}
		\vb'=\gamma\vb
		  = \left(\zeta_{d}v_{n-1},v_{1},v_{2},\ldots,v_{n-2},\zeta_{d}^{d-1}v_{n}\right)^{\tp}.
	\end{equation}
	In what follows, we determine the dimension of $\fix{t\gamma}$ for $t\in T$. Recall that for 
	$w\in G(d,d,n)$, the fixed space of $w$ is defined as 
	$\fix{w}=\{\vb\in\mathbb{C}^{n}\mid w\vb=\vb\}$. We distinguish three cases:
	
	(i) If $t=\colref{a}{b}{s}$, where $1\leq a<b<n$ and $0\leq s<d$, then we obtain
	\begin{align*}
		t\vb'
		  & = t\left(\zeta_{d}v_{n-1},v_{1},v_{2},\ldots,v_{n-2},\zeta_{d}^{d-1}v_{n}\right)^{\tp}\\
		  & = \left(\zeta_{d}v_{n-1},v_{1},\ldots,v_{a-2},\zeta_{d}^{d-s}v_{b-1},v_{a},\ldots,v_{b-2},
		  \zeta_{d}^{s}v_{a-1},v_{b},\ldots,v_{n-2},\zeta_{d}^{d-1}v_{n}\right)^{\tp}.
	\end{align*}
	Thus $\fix{t\gamma}$ is given by the following system of linear equations:
	\begin{displaymath}\begin{aligned}
		& v_{1}=\zeta_{d}v_{n-1}, && v_{2}=v_{1}, && v_{3}=v_{2}, && \ldots, && v_{a-1}=v_{a-2},\\
		& v_{a}=\zeta_{d}^{d-s}v_{b-1}, && v_{a+1}=v_{a}, && v_{a+2}=v_{a+1}, && \ldots, && v_{b-1}=v_{b-2},\\
		& v_{b}=\zeta_{d}^{s}v_{a-1}, && v_{b+1}=v_{b}, && v_{b+2}=v_{b+1}, && \ldots, && v_{n-1}=v_{n-2},\\
		& v_{n}=\zeta_{d}^{d-1}v_{n}.
	\end{aligned}\end{displaymath}
	If we put these equations together, then we obtain
	\begin{align*}
		\zeta_{d}^{s+1}v_{n-1} & = \zeta_{d}^{s}v_{1} = \cdots = \zeta_{d}^{s}v_{a-1} = v_{b} = \cdots = v_{n-1},\\
		\zeta_{d}^{d-s}v_{b-1} & = v_{a} = \cdots = v_{b-1},\\
		\zeta_{d}^{d-1}v_{n} & = v_{n}.
	\end{align*}
	The first line has a nontrivial solution only if $s=d-1$ (which forces the components in lines $2$ and $3$ to 
	be zero), and hence $\dim\fix{t\gamma}=1$. Similarly, the second line has a nontrivial solution only if $s=0$
	(which forces the components in lines $1$ and $3$ to be zero), and hence $\dim\fix{t\gamma}=1$. Thus in these
	two cases, we obtain $t\leq_{T}\gamma$. Every other value of $s$ forces all components to be zero, and hence
	$\dim\fix{t\gamma}=0$, which implies that $t\not\leq_{T}\gamma$.
	
	(ii) If $t=\colref{1}{n}{s}$, where $0\leq s<d$, then we obtain
	\begin{displaymath}
		t\vb' = \left(\zeta_{d}^{d-s-1}v_{n},v_{1},\ldots,v_{n-2},\zeta_{d}^{s+1}v_{n-1}\right)^{\tp}.
	\end{displaymath}
	Analogously to (i), we see that $\dim\fix{t\gamma}=1$, which implies $t\leq_{T}\gamma$.
	
	(iii) If $t=\colref{a}{n}{s}$, where $1<a<n$ and $0\leq s<d$, then we obtain
	\begin{displaymath}
		t\vb' = \left(\zeta_{d}v_{n-1},v_{1},\ldots,v_{a-2},\zeta_{d}^{d-s-1}v_{n},v_{a},\ldots,
		    v_{n-2},\zeta_{d}^{s}v_{a-1}\right)^{\tp}.
	\end{displaymath}
	Again, analogously to (i), we see that $\dim\fix{t\gamma}=1$, which implies $t\leq_{T}\gamma$.
\end{proof}

\subsection{The Labeling}
  \label{sec:gddn_labeling}
In order to prove the EL-shellablity of $\nc{G(d,d,n)}$, we need to find a suitable EL-labeling. A good candidate for
such a labeling arises quite naturally from the group structure of $G(d,d,n)$:
\begin{equation}\label{eq:labeling_gddn}
	\lambda_{\gamma}:\mathcal{E}\bigr(\nc{G(d,d,n)}\bigr)\to T_{\gamma},\quad (u,v)\mapsto u^{-1}v.
\end{equation}
The analogous labeling was already used in \cite{athanasiadis07shellability} to prove the EL-shellability of the noncrossing
partition lattices associated with real reflection groups. Let us first discuss some basic properties of this labeling.

\begin{lemma}\label{lem:reduced_word_maximal_chain}
	Let $u,v\in\nc{G(d,d,n)}$ with $u\leq_{T}v$. A product $t_{1}t_{2}\cdots t_{k}$ is a 
	reduced $T$-word of $u^{-1}v$ if and only if there exists a maximal chain
	$\mathbf{c}:u=x_{0}<_{T}x_{1}<_{T}\cdots<_{T}x_{k}=v$ in $\nc{G(d,d,n)}$ with 
	$\lambda_{\gamma}(\mathbf{c})=(t_{1},t_{2},\ldots,t_{k})$.
\end{lemma}
\begin{proof}
	Let $\mathbf{c}:u=x_{0}<_{T}x_{1}<_{T}\cdots<_{T}x_{k}=v$ be a maximal chain in $[u,v]$ with 
	$\lambda_{\gamma}(\mathbf{c})=(t_{1},t_{2},\ldots,t_{k})$. Since $\nc{G(d,d,n)}$ is graded, we conclude
	$\ell_{T}(u^{-1}v)=k$. By definition of $\lambda_{\gamma}$, we obtain $x_{i-1}^{-1}x_{i}=t_{i}$ for all 
	$i\in\{1,2,\ldots,k\}$. Thus
	\begin{displaymath}
		t_{1}t_{2}\cdots t_{k}=x_{0}^{-1}x_{1}x_{1}^{-1}x_{2}\cdots x_{k-1}^{-1}x_{k}=u^{-1}v,
	\end{displaymath}
	as desired.
	
	On the other hand, let $t_{1}t_{2}\cdots t_{k}$ be a reduced $T$-word of $u^{-1}v$. Define
	$x_{0}=u$ and $x_{i}=ut_{1}t_{2}\cdots t_{i}$ for $i\in\{1,2,\ldots,k\}$. Then we have $x_{k}=v$, 
	and $x_{i-1}^{-1}x_{i}=t_{i-1}t_{i-2}\cdots t_{1}u^{-1}ut_{1}t_{2}\cdots t_{i}=t_{i}$. This implies 
	$x_{i-1}<_{T}x_{i}$, and $\ell_{T}(x_{i})=\ell_{T}(x_{i-1})+1$ for all $i\in\{1,2,\ldots,k\}$. Since 
	$\nc{G(d,d,n)}$ is graded, we conclude that $\mathbf{c}:u=x_{0}<_{T}x_{1}<_{T}\cdots<_{T}x_{k}=v$ is a maximal
	chain in $[u,v]$ with $\lambda_{\gamma}(\mathbf{c})=(t_{1},t_{2},\ldots,t_{k})$.
\end{proof}

Because of the previous lemma we will use the expressions ``reduced $T$-word of $w$'' and 
``maximal chain in $[\varepsilon,w]$'' interchangeably. In particular, we call a reduced $T$-word of $w$ 
\alert{increasing} if the corresponding maximal chain in $[\varepsilon,w]$ is increasing with respect to 
$\lambda_{\gamma}$. 

\begin{lemma}\label{lem:chain_labels}
	Let $[u,v]$ be a non-singleton interval in $\nc{G(d,d,n)}$. 
	\begin{enumerate}[(i)]
		\item If $[u,v]$ has rank two and $(r,t)\in\lambda_{\gamma}\bigl([u,v]\bigr)$, then 
			$(t,r')\in\lambda_{\gamma}\bigl([u,v]\bigr)$ for some $r'\in T_{\gamma}$.
		\item If $t\in T_{\gamma}$ appears in some coordinate of an element $\lambda_{\gamma}\bigl([u,v]\bigr)$, 
			then $t=\lambda_{\gamma}(u,u')$ for some cover relation $(u,u')$ in $[u,v]$.
		\item The reflections appearing as the coordinates of an element of $\lambda_{\gamma}\bigl([u,v]\bigr)$ 
			are pairwise distinct.
	\end{enumerate}
\end{lemma}
\begin{proof}
	(i) Let $(r,t)\in\lambda_{\gamma}\bigl([u,v]\bigr)$. Recall from Proposition~\ref{prop:reflections_gddn} that 
	$t^{-1}=t$. Since $[u,v]$ has rank two, we conclude that $u^{-1}v=rt$. Now we apply 
	Lemma~\ref{lem:shifting} to obtain $u^{-1}v=t(trt)$ and with \cite{lehrer09unitary}*{Lemma~1.9} 
	and Lemma~\ref{lem:reduced_word_maximal_chain} follows that $r'=trt\in T_{\gamma}$. 
	
	(ii) This follows from repeated application of (i).
	
	(iii) Let $\mathbf{c}$ be a maximal chain in $[u,v]$ with 
	$\lambda_{\gamma}(\mathbf{c})=(t_{1},t_{2},\ldots,t_{k})$. It follows from Lemma~\ref{lem:reduced_word_maximal_chain} 
	that $\ell_{T}(u^{-1}v)=k$. Suppose that there exist indices $i<j$ with $t_{i}=t_{j}$. In view of (ii) we can find 
	a maximal chain $\mathbf{c'}$ in $[u,v]$ with
	$\lambda_{\gamma}(\mathbf{c'})=(t_{1},t_{2},\ldots,t_{i},t_{j},t'_{i+2},t'_{i+3},\ldots,t'_{k})$. 
	By assumption and Proposition~\ref{prop:reflections_gddn}, we obtain $t_{i}t_{j}=t_{i}^{2}=\varepsilon$, and 
	Lemma~\ref{lem:reduced_word_maximal_chain} implies now that 
	$(t_{1},t_{2},\ldots,t_{i-1},t'_{i+2},t'_{i+3},\ldots,t'_{k})$ is a reduced $T$-word of $u^{-1}v$ 
	which has length $k-2$. This, however, contradicts $\ell_{T}(u^{-1}v)=k$.
\end{proof}

\begin{lemma}\label{lem:interval_isomorphism}
	Let $[u,v]$ be a non-singleton interval in $\nc{G(d,d,n)}$. The poset isomorphism
	$f:[\varepsilon,u^{-1}v]\to[u,v]$ given by $f(x)=ux$ satisfies 
	$\lambda_{\gamma}(x,y)=\lambda_{\gamma}\bigl(f(x),f(y)\bigr)$ for all cover relations $(x,y)$ in 
	$[\varepsilon,u^{-1}v]$.
\end{lemma}
\begin{proof}
	This is straightforward to verify.
\end{proof}

Now we can prove the following proposition.

\begin{proposition}\label{prop:increasing_chain}
	Let $\gamma\in G(d,d,n)$ be the Coxeter element defined in \eqref{eq:coxeter_gddn}, and let $\lambda_{\gamma}$ be the 
	edge-labeling of $\nc{G(d,d,n)}$ defined in \eqref{eq:labeling_gddn}. For any total order of $T_{\gamma}$ 
	and any non-singleton interval $[u,v]$ in $\nc{G(d,d,n)}$, the lexicographically smallest maximal chain in 
	$[u,v]$ is increasing with respect to $\lambda_{\gamma}$.
\end{proposition}
\begin{proof}
	We follow the proof suggested for the analogous statement for real reflection groups in 
	\cite{athanasiadis07shellability}*{Theorem~3.5(i)}. 
	
	Let $[u,v]$ be a non-singleton interval of $\nc{G(d,d,n)}(\gamma)$, and let $\prec$ be a total order of 
	$T_{\gamma}$. We proceed by induction on $\ell_{T}(u^{-1}v)$. If $\ell_{T}(u^{-1}v)=1$, then the statement is 
	trivial. So, suppose that $\ell_{T}(u^{-1}v)=k$, and the statement is true for all intervals 
	$[u',v']$ in $\nc{G(d,d,n)}$ with $\ell_{T}({u'}^{-1}v')<k$. It is easy to see that 
	all cover relations $(u,\bar{u})$ with $\bar{u}\leq_{T}v$ have a different label with respect to 
	$\lambda_{\gamma}$. Now, let 
	$t=\min\bigl\{\lambda_{\gamma}(u,ut)\mid t\in T_{\gamma}\;\text{and}\;ut\leq_{T}v\bigr\}$, where the 
	minimum is taken with respect to $\prec$. Suppose that there is a chain in $[ut,v]$ having an edge labeled by 
	a reflection $r$ with $r\prec t$. Then Lemma~\ref{lem:chain_labels}(ii) implies that $(u,ur)$ is a cover
	in $[u,v]$, contradicting the choice of $t$. Moreover, Lemma~\ref{lem:chain_labels}(iii) implies that $t$ does 
	not occur in $\lambda\bigl([ut,v]\bigr)$. By induction assumption, the lexicographic smallest maximal chain in 
	$[ut,v]$ is increasing with respect to $\prec$. By the previous reasoning we can append this chain to the cover 
	$(u,ut)$, which implies that the lexicographic smallest maximal chain in $[u,v]$ is increasing with respect 
	to $\prec$.
\end{proof}

In fact, the previous proposition holds not only for the particular Coxeter element defined in \eqref{eq:coxeter_gddn}, but
in view of Proposition~\ref{prop:nc_independent} for any Coxeter element of $G(d,d,n)$.

\subsection{The Proof}
  \label{sec:gddn_proof}
In this section, we prove Theorem~\ref{thm:shellability_main} for the case where $W=G(d,d,n)$ with $d,n\geq 3$. We have
seen in Proposition~\ref{prop:increasing_chain} that the lexicographically smallest maximal chain in $\nc{G(d,d,n)}$ is increasing
for any total order of $T_{\gamma}$. We will next choose a particular total order on $T_{\gamma}$, and show that with
respect to this order, there exists a unique increasing maximal chain in every interval of $\nc{G(d,d,n)}$. This order 
is denoted by $\prec_{\gamma}$, and is given by:
\begin{equation}\label{eq:order_gddn}\begin{aligned}
	&& \colref{1}{2}{0} \prec_{\gamma} && \colref{1}{3}{0} \prec_{\gamma} && \cdots \prec_{\gamma} && \colref{1}{(n\!-\!1)}{0}\\
	&& \prec_{\gamma} && \colref{2}{3}{0} \prec_{\gamma} && \cdots \prec_{\gamma} && \colref{2}{(n\!-\!1)}{0}\\
	&& \prec_{\gamma} && \colref{3}{4}{0} \prec_{\gamma} && \cdots \prec_{\gamma} && \colref{(n\!-\!2)}{(n\!-\!1)}{0}\\
	\prec_{\gamma} && \colref{1}{n}{0} \prec_{\gamma} && \colref{1}{n}{d-1} \prec_{\gamma} && \cdots \prec_{\gamma} 
		&& \colref{1}{n}{1}\\
	&& \prec_{\gamma} && \colref{1}{2}{d-1} \prec_{\gamma} && \cdots \prec_{\gamma} && \colref{1}{(n\!-\!1)}{d-1}\\
	\prec_{\gamma} && \colref{2}{n}{0} \prec_{\gamma} && \colref{2}{n}{d-1} \prec_{\gamma} && \cdots \prec_{\gamma} 
		&& \colref{2}{n}{1}\\
	&& \prec_{\gamma} && \colref{2}{3}{d-1} \prec_{\gamma} && \cdots \prec_{\gamma} && \colref{2}{(n\!-\!1)}{d-1}\\
	\prec_{\gamma} && \colref{3}{n}{0} \prec_{\gamma} && \colref{3}{n}{d-1} \prec_{\gamma} && \cdots \prec_{\gamma} 
		&& \colref{(n\!-\!1)}{n}{1}.\\
\end{aligned}\end{equation}

Now we can state the main result of this section.

\begin{theorem}\label{thm:shellability_gddn}
	Let $d,n\geq 3$, let $\gamma\in G(d,d,n)$ be the Coxeter element defined in \eqref{eq:coxeter_gddn},
	let $T_{\gamma}$ be the set of reflections in $\nc{G(d,d,n)}(\gamma)$, and let $\lambda_{\gamma}$ be the 
	edge-labeling of $\nc{G(d,d,n)}(\gamma)$ defined in \eqref{eq:labeling_gddn}. If $T_{\gamma}$ is ordered
	as in \eqref{eq:order_gddn}, then $\lambda_{\gamma}$ is an EL-labeling of $\nc{G(d,d,n)}(\gamma)$.
\end{theorem}

The proof of this theorem consists of several steps which we present separately in the following statements.

\begin{lemma}\label{lem:sym_shellable}
	Let $\gamma$ be the Coxeter element of $G(d,d,n)$ defined in \eqref{eq:coxeter_gddn}. If $d=1$, then for every 
	$w\leq_{T}\gamma$ there exists a unique increasing reduced $T$-word of $w$ with respect to \eqref{eq:order_gddn}.
\end{lemma}
\begin{proof}
	The complex reflection group $G(1,1,n)$ is isomorphic to the symmetric group $\mathfrak{S}_{n}$, and under this 
	isomorphism, $\gamma$ corresponds to the long cycle $(1\;2\;\ldots\;n)$. 
	Then we have $T_{\gamma}=\Bigl\{\colref{a}{b}{0}\mid 1\leq a<b\leq n\Bigr\}$, and the total order $\prec_{\gamma}$ 
	restricts to that from \cite{athanasiadis07shellability}*{Example~3.3}, which is the lexicographic order on the 
	transpositions of $\mathfrak{S}_{n}$. Now \cite{athanasiadis07shellability}*{Theorem~3.5(ii)} implies the claim.
\end{proof}

The next lemma states what the coatoms of $\nc{G(d,d,n)}(\gamma)$ look like.

\begin{lemma}\label{lem:gamma_coatoms}
	Let $\gamma$ be the Coxeter element of $G(d,d,n)$ as defined in \eqref{eq:coxeter_gddn}, and let $t\in T_{\gamma}$. 
	If $t=\colref{a}{b}{0}$ for $1\leq a<b<n$, then we have 
	\begin{displaymath}
		\gamma t=\bigl[\colint{1}{0}\;\cdots\;\colint{a}{0}\;\colint{(b\!+\!1)}{0}\;\cdots\;\colint{(n\!-\!1)}{0}\bigr]
		\bigl[\colint{n}{0}\bigr]^{-1}\bigl(\!\bigl(\colint{(a\!+\!1)}{0}\;\cdots\;\colint{b}{0}\bigr)\!\bigr).
	\end{displaymath}
	If $t=\colref{a}{b}{d-1}$ for $1\leq a<b<n$, then we have
	\begin{displaymath}
		\gamma t=\bigl(\!\bigl(\colint{1}{0}\;\cdots\;\colint{a}{0}\;\colint{(b\!+\!1)}{d-1}\;\cdots\;\
			\colint{(n\!-\!1)}{d-1}\bigr)\!\bigr)\bigl[\colint{(a\!+\!1)}{0}\;\cdots\;\colint{b}{0}\bigr]
			\bigl[\colint{n}{0}\bigr]^{-1}.
	\end{displaymath}
	If $t=\colref{a}{n}{s}$ for $1\leq a<n$ and $0\leq s<d$, then we have
	\begin{displaymath}
		\gamma t=\bigl(\!\bigl(\colint{1}{0}\;\cdots\;\colint{a}{0}\;\colint{n}{s-1}\;\colint{(a\!+\!1)}{d-1}\;\cdots\;
			\colint{(n\!-\!1)}{d-1}\bigr)\!\bigr).
	\end{displaymath}
\end{lemma}
\begin{proof}
	This is a straightforward computation.
\end{proof}

Now we show that $\lambda_{\gamma}$ together with the total order of $T_{\gamma}$ from \eqref{eq:order_gddn} is an EL-labeling of 
the intervals $[\varepsilon,w]$ with $w\leq_{T}\gamma$ and $\ell_{T}(w)=2$.

\begin{lemma}\label{lem:gddn_rank_2}
	Let $w\leq_{T}\gamma$ with $\ell_{T}(w)=2$. There exists a unique increasing reduced $T$-word of $w$ with respect to the 
	restriction of $\prec_{\gamma}$ to the reflections in $T_{\gamma}\cap [\varepsilon,w]$.
\end{lemma}
\begin{proof}
	Let $w=t_{1}t_{2}$ for $t_{1},t_{2}\in T_{\gamma}$. If $t_{1}$ and $t_{2}$ commute, then $w=t_{1}t_{2}=t_{2}t_{1}$ are the only possible 
	reduced $T$-words of $w$. Since $\prec_{\gamma}$ is a total order there is nothing to show. Suppose that $t_{1}$ and $t_{2}$ do not 
	commute. With the help of Proposition~\ref{prop:gamma_reflections} we can explicitly 
	determine the possible forms of $w$. Analogously to the proof of Proposition~\ref{prop:gamma_reflections}, we investigate the fixed 
	space of $w^{-1}\gamma$ to determine which of these possibilities can actually occur in $\nc{G(d,d,n)}(\gamma)$. The details of this 
	investigation can be found in Appendix~\ref{app:nc_gddn_details_1}. We state here only the relevant cases.
	
	(i) Let $t_{1}=\colref{a}{b}{0},t_{2}=\colref{b}{c}{0}$, where $1\leq a<b<c<n$. We have 
	$w=\bigl(\!\bigl(\colint{a}{0}\;\colint{b}{0}\;\colint{c}{0}\bigr)\!\bigr)$, and the reduced $T$-words of $w$ are
	\begin{align*}
		w & = \colref{a}{b}{0}\colref{b}{c}{0}\\
		& = \colref{b}{c}{0}\colref{a}{c}{0}\\
		& = \colref{a}{c}{0}\colref{a}{b}{0}.
	\end{align*}
	According to \eqref{eq:order_gddn} only $w=\colref{a}{b}{0}\colref{b}{c}{0}$ is increasing.

	(ii) Let $t_{1}=\colref{a}{b}{0},t_{2}=\colref{b}{c}{d-1}$, where $1\leq a<b<c<n$. We have 
	$w=\bigl(\!\bigl(\colint{a}{0}\;\colint{b}{0}\;\colint{c}{d-1}\bigr)\!\bigr)$, and the reduced $T$-words of $w$ are
	\begin{align*}
		w & = \colref{a}{b}{0}\colref{b}{c}{d-1}\\
		& = \colref{b}{c}{d-1}\colref{a}{c}{d-1}\\
		& = \colref{a}{c}{d-1}\colref{a}{b}{0}.
	\end{align*}
	According to \eqref{eq:order_gddn} only $w=\colref{a}{b}{0}\colref{b}{c}{d-1}$ is increasing.
	
	(iii) Let $t_{1}=\colref{a}{b}{0},t_{2}=\colref{b}{n}{s}$, where $1\leq a<b<n$ and $0\leq s<d$. We have 
	$w=\bigl(\!\bigl(\colint{a}{0}\;\colint{b}{0}\;\colint{n}{s}\bigr)\!\bigr)$, and the reduced $T$-words of $w$ are 
	\begin{align*}
		w & = \colref{a}{b}{0}\colref{b}{n}{s}\\
		& = \colref{b}{n}{s}\colref{a}{n}{s}\\
		& = \colref{a}{n}{s}\colref{a}{b}{0}.
	\end{align*}
	According to \eqref{eq:order_gddn} only $w=\colref{a}{b}{0}\colref{b}{n}{s}$ is increasing.

	(iv) Let $t_{1}=\colref{b}{c}{0},t_{2}=\colref{a}{c}{d-1}$, where $1\leq a<b<c<n$. We have 
	$w=\bigl(\!\bigl(\colint{a}{0}\;\colint{b}{d-1}\;\colint{c}{d-1}\bigr)\!\bigr)$, and the reduced $T$-words $w$ are 
	\begin{align*}
		w & = \colref{a}{b}{d-1}\colref{b}{c}{0}\\
		& = \colref{b}{c}{0}\colref{a}{c}{d-1}\\
		& = \colref{a}{c}{d-1}\colref{a}{b}{d-1}.
	\end{align*}
	According to \eqref{eq:order_gddn} only $w=\colref{b}{c}{0}\colref{a}{c}{d-1}$ is increasing.
	
	(v) Let $t_{1}=\colref{a}{b}{d-1},t_{2}=\colref{a}{n}{s}$, where $1\leq a<b<n$ and $0\leq s<d$. We have 
	$w=\bigl(\!\bigl(\colint{a}{0}\;\colint{n}{s}\;\colint{b}{d-1}\bigr)\!\bigr)$, and the reduced $T$-words of $w$ are 
	\begin{align*}
		w & = \colref{a}{n}{s}\colref{b}{n}{s+1}\\
		& = \colref{b}{n}{s+1}\colref{a}{b}{d-1}\\
		& = \colref{a}{b}{d-1}\colref{a}{n}{s}.
	\end{align*}
	According to \eqref{eq:order_gddn} only $w=\colref{a}{n}{s}\colref{b}{n}{s+1}$ is increasing.

	(vi) Let $t_{1}=\colref{a}{n}{s},t_{2}=\colref{a}{n}{t}$, where $1\leq a<n$ and $0\leq s,t<d$ with $t\neq s$. We have 
	$w=\bigl[\colint{a}{0}\bigr]_{t-s}\bigl[\colint{n}{0}\bigr]_{s-t}$, and the reduced $T$-words of $w$ are 
	\begin{align*}
		w & = \colref{a}{n}{s}\colref{a}{n}{s+1}\\
		& = \colref{a}{n}{s+1}\colref{a}{n}{s+2}\\ 
		& = \colref{a}{n}{s+2}\colref{a}{n}{s+3}\\
		& = \quad\cdots\\
		& = \colref{a}{n}{s-1}\colref{a}{n}{s}.
	\end{align*}
	According to \eqref{eq:order_gddn} only $w=\colref{a}{n}{0}\colref{a}{n}{1}$ is increasing.
\end{proof}

As a next step, we show that the restriction of $\lambda_{\gamma}$ to parabolic subgroups isomorphic to $G(1,1,n')$ for $n'\leq n$, yields an EL-labeling
of the corresponding interval in $\nc{G(d,d,n)}$, with respect to the restriction of the order in \eqref{eq:order_gddn}.

\begin{proposition}\label{prop:gddn_symmetric_irreducible}
	Let $w\leq_{T}\gamma$ be such that the parabolic subgroup of $G(d,d,n)$, in which $w$ is a Coxeter element, is isomorphic to $G(1,1,n')$ for some 
	$n'\leq n$. Then $w$ is of one of the following three forms:
	\begin{enumerate}[(i)]
		\item $w=\bigl(\!\bigl(\colint{(a\!+\!1)}{0}\;\colint{(a\!+\!2)}{0}\;\ldots\;\colint{b}{0}\bigr)\!\bigr)$, where $1\leq a<b<n$,
		\item $w=\bigl(\!\bigl(\colint{1}{0}\;\colint{2}{0}\;\ldots\;\colint{a}{0}\;\colint{(b\!+\!1)}{d-1}\;\colint{(b\!+\!2)}{d-1}\;\ldots\;
			\colint{(n\!-\!1)}{d-1}\bigr)\!\bigr)$, where $1\leq a<b<n$, \quad or
		\item $w=\bigl(\!\bigl(\colint{1}{0}\;\colint{2}{0}\;\ldots\;\colint{a}{0}\;\colint{n}{s-1}\colint{(a\!+\!1)}{d-1}\;\colint{(a\!+\!2)}{d-1}\;
			\ldots\;\colint{(n\!-\!1)}{d-1}\bigr)\!\bigr)$, where $1\leq a<n$.
	\end{enumerate}
	Moreover, in each of these cases there exists a unique increasing reduced $T$-word of $w$ with respect to the restriction of $\prec_{\gamma}$ 
	to the reflections in $T_{\gamma}\cap[\varepsilon,w]$.
\end{proposition}
\begin{proof}
	The observation that $w$ can only be of the forms (i)--(iii) is a straightforward computation using 
	Proposition~\ref{prop:gamma_reflections}. The proof of the second part of this proposition is rather technical, and hence omitted here. 
	The details can be found in Appendix~\ref{app:nc_gddn_details_2}. We only present the unique increasing reduced $T$-words of $w$ for the different
	cases:
	
	(i) Let $w=\bigl(\!\bigl(\colint{(a\!+\!1)}{0}\;\colint{(a\!+\!2)}{0}\;\ldots\;\colint{b}{0}\bigr)\!\bigr)$, where $1\leq a<b<n$. The unique increasing
	reduced $T$-word of $w$ is
	\begin{displaymath}
		w = \colref{(a\!+\!1)}{(a\!+\!2)}{0}\colref{(a\!+\!2)}{(a\!+\!3)}{0}\cdots\colref{(b\!-\!1)}{b}{0}.
	\end{displaymath}

	(ii) Let $w=\bigl(\!\bigl(\colint{1}{0}\;\colint{2}{0}\;\ldots\;\colint{a}{0}\;\colint{(b\!+\!1)}{d-1}\;\colint{(b\!+\!2)}{d-1}\;\ldots\;
	\colint{(n\!-\!1)}{d-1}\bigr)\!\bigr)$, where $1\leq a<b<n$. The unique increasing reduced $T$-word of $w$ is
	\begin{multline*}
		w = \colref{1}{2}{0}\colref{2}{3}{0}\cdots\colref{(a\!-\!1)}{a}{0}\\
			\colref{(b\!+\!1)}{(b\!+\!2)}{0}\cdots\colref{(n\!-\!2)}{(n\!-\!1)}{0}\colref{a}{(n\!-\!1)}{d-1}.
	\end{multline*}

	(iii) Let $w=\bigl(\!\bigl(\colint{1}{0}\;\colint{2}{0}\;\ldots\;\colint{a}{0}\;\colint{n}{s-1}\colint{(a\!+\!1)}{d-1}\;\colint{(a\!+\!2)}{d-1}\;
	\ldots\;\colint{(n\!-\!1)}{d-1}\bigr)\!\bigr)$, where $1\leq a<n$. The unique increasing reduced $T$-word of $w$ is
	\begin{align*}
		w & = \colref{1}{2}{0}\colref{2}{3}{0}\cdots\colref{(a\!-\!1)}{a}{0}\colref{(a\!+\!1)}{(a\!+\!2)}{0}\\
		& \kern1cm\colref{(a\!+\!2)}{(a\!+\!3)}{0}\cdots\colref{(n\!-\!2)}{(n\!-\!1)}{0}\colref{a}{n}{s-1}\\
		& \kern1cm\colref{(n\!-\!1)}{n}{s}.
	\end{align*}
\end{proof}

The following corollary is immediate.

\begin{corollary}\label{cor:gddn_symmetric_reducible}
	Let $w\leq_{T}\gamma$ such that the parabolic subgroup $W$ of $G(d,d,n)$, in which $w$ is a Coxeter element, is reducible, and hence 
	$W=W_{1}\times W_{2}\times\cdots\times W_{l}$ for some $l$. If for each $i\in\{1,2,\ldots,l\}$, the group $W_{i}$ is isomorphic to $G(1,1,n_{i})$ 
	for $n_{i}\leq n$, then there exists a unique increasing reduced $T$-word of $w$ with respect to the restriction of $\prec_{\gamma}$ to the 
	reflections in $T_{\gamma}\cap[\varepsilon,w]$.
\end{corollary}
\begin{proof}
	This works analogously to the proof of Proposition~\ref{prop:gddn_symmetric_irreducible}. See Appendix~\ref{app:nc_gddn_details_3} for the details.
\end{proof}

The next result is the analogue of Proposition~\ref{prop:gddn_symmetric_irreducible} for the intervals which are isomorphic to some $G(d,d,n')$ with $n'<n$.

\begin{proposition}\label{prop:gddn_gend_irreducible}
	Let $w\leq_{T}\gamma$ such that the parabolic subgroup of $G(d,d,n)$, in which $w$ is a Coxeter element, is isomorphic to $G(d,d,n')$ for some 
	$n'<n$. There exists a unique increasing reduced $T$-word of $w$ with respect to the restriction of $\prec_{\gamma}$ to the reflections 
	in $T_{\gamma}\cap[\varepsilon,w]$.
\end{proposition}
\begin{proof}
	Again we proceed by induction on $\ell_{T}(w)$, and the case $\ell_{T}(w)=2$ is covered by Lemma~\ref{lem:gddn_rank_2}. In view of 
	Lemma~\ref{lem:interval_isomorphism}, we can assume that $w=\gamma$, and that the claim is true for all $w'<_{T}w$ that satisfy the condition. 
	We notice immediately that the simple decomposition of $\gamma$, namely
	\begin{displaymath}
		\gamma = s_{1}s_{2}\cdots s_{n} = \colref{1}{2}{0}\colref{2}{3}{0}\cdots\colref{(n\!-\!1)}{n}{0}\colref{(n\!-\!1)}{n}{1}
	\end{displaymath}
	is increasing with respect to \eqref{eq:order_gddn}. Let $\gamma=t_{1}t_{2}\cdots t_{n}$ be an increasing
	reduced $T$-word of $\gamma$ that is different from $s_{1}s_{2}\cdots s_{n}$, and let $k$ be the maximal index where $t_{k}\neq s_{k}$. 
	If $k<n$, then $\gamma s_{n}s_{n-1}\cdots s_{k+1}=\bigl(\!\bigl(\colint{1}{0}\;\colint{2}{0}\;\ldots\;\colint{(k\!+\!1)}{0}\bigr)\!\bigr)$. It 
	follows from Proposition~\ref{prop:gddn_symmetric_irreducible} that the only increasing reduced $T$-word of 
	$\gamma s_{n}s_{n-1}\cdots s_{k+1}$
	is $s_{1}s_{2}\cdots s_{k}$, which is a contradiction. Hence let $k=n$. In view of Proposition~\ref{prop:gamma_reflections}, there are 
	essentially three possible choices of $t_{n}$, and we write $\gamma'=\gamma t_{n}$. Moreover, let $W$ denote the parabolic subgroup of $G(d,d,n)$
	in which $\gamma'$ is a Coxeter element. 
	
	(i) Let $t_{n}=\colref{a}{b}{0}$, where $1\leq a<b<n$. Lemma~\ref{lem:gamma_coatoms} implies that we can write $\gamma'=w_{1}w_{2}$ with 
	\begin{align*}
		w_{1} & = \bigl[\colint{1}{0}\;\colint{2}{0}\;\ldots\;\colint{a}{0}\;\colint{(b\!+\!1)}{0}\;\colint{(b\!+\!2)}{0}\;\ldots\;
			\colint{(n\!-\!1)}{0}\bigr]\bigl[\colint{n}{0}\bigr]_{d-1},\quad\text{and}\\
		w_{2} & = \bigl(\!\bigl(\colint{(a\!+\!1)}{0}\;\colint{(a\!+\!2)}{0}\;\ldots\;\colint{b}{0}\bigr)\!\bigr).
	\end{align*}
	This implies that $w_{1}$ is a Coxeter element in a parabolic subgroup $W_{1}$ of $G(d,d,n)$ isomorphic to $G(d,d,n+a-b)$, and $w_{2}$ is a 
	Coxeter element in a parabolic subgroup $W_{2}$ of $G(d,d,n)$ isomorphic to $G(1,1,b-a-1)$, and we can write $W=W_{1}\times W_{2}$. By induction 
	hypothesis and by Proposition~\ref{prop:gddn_symmetric_irreducible} there exist unique increasing reduced $T$-words of $w_{1}$ and $w_{2}$, 
	namely
	\begin{align*}
		w_{1} & = \colref{1}{2}{0}\colref{2}{3}{0}\cdots\colref{(a\!-\!1)}{a}{0}\colref{a}{(b\!+\!1)}{0}\\
		& \kern1cm\colref{(b\!+\!1)}{(b\!+\!2)}{0}\cdots\colref{(n\!-\!1)}{n}{0}\colref{(n\!-\!1)}{n}{1},\quad\text{and}\\
		w_{2} & = \colref{(a\!+\!1)}{(a\!+\!2)}{0}\colref{(a\!+\!2)}{(a\!+\!3)}{0}\cdots\colref{(b\!-\!1)}{b}{0}.
	\end{align*}
	It is immediate to see that 
	\begin{multline*}
		\gamma' = \colref{1}{2}{0}\cdots\colref{(a\!-\!1)}{a}{0}\colref{a}{(b\!+\!1)}{0}\colref{(a\!+\!1)}{(a\!+\!2)}{0}\cdots\\
			\colref{(b\!-\!1)}{b}{0}\colref{(b\!+\!1)}{(b\!+\!2)}{0}\cdots\colref{(n\!-\!1)}{n}{0}\colref{(n\!-\!1)}{n}{1}
	\end{multline*}
	is the unique increasing reduced $T$-word of $\gamma'$ and hence has to correspond to $t_{1}t_{2}\cdots t_{n-1}$. However, we have for instance
	$\colref{(n\!-\!1)}{n}{1}\succ_{\gamma}\colref{a}{b}{0}=t_{n}$, which contradicts the assumption that $t_{1}t_{2}\cdots t_{n}$ is increasing.
	
	(ii) Let $t_{n}=\colref{a}{b}{d-1}$, where $1\leq a<b<n$. Lemma~\ref{lem:gamma_coatoms} implies that we can write $\gamma'=w_{1}w_{2}$ with 
	\begin{align*}
		w_{1} & = \bigl(\!\bigl(\colint{1}{0}\;\colint{2}{0}\;\ldots\;\colint{a}{0}\;\colint{(b\!+\!1)}{d-1}\;\colint{(b\!+\!2)}{d-1}\;\ldots\;
			\colint{(n\!-\!1)}{d-1}\bigr)\!\bigr),\quad\text{and}\\
		w_{2} & = \bigl[\colint{(a\!+\!1)}{0}\;\colint{(a\!+\!2)}{0}\;\ldots\;\colint{b}{0}\bigr]\bigl[\colint{n}{0}\bigr]_{d-1}.
	\end{align*}
	This implies that $w_{1}$ is a Coxeter element in a parabolic subgroup $W_{1}$ of $G(d,d,n)$ isomorphic to $G(1,1,n+a-b-2)$, and $w_{2}$ is a 
	Coxeter element in a parabolic subgroup $W_{2}$ of $G(d,d,n)$ isomorphic to $G(d,d,b-a+1)$, and we can write $W=W_{1}\times W_{2}$. By induction 
	hypothesis and by Proposition~\ref{prop:gddn_symmetric_irreducible} there exist unique increasing reduced $T$-words of $w_{1}$ and $w_{2}$, 
	namely
	\begin{align*}
		w_{1} & = \colref{1}{2}{0}\colref{2}{3}{0}\cdots\colref{(a\!-\!1)}{a}{0}\\
		& \kern1cm\colref{(b\!+\!1)}{(b\!+\!2)}{0}\cdots\colref{(n\!-\!2)}{(n\!-\!1)}{0}\colref{a}{(n\!-\!1)}{d-1},\quad\text{and}\\
		w_{2} & = \colref{(a\!+\!1)}{(a\!+\!2)}{0}\colref{(a\!+\!2)}{(a\!+\!3)}{0}\cdots\colref{(b\!-\!1)}{b}{0}\\
		& \kern1cm\colref{b}{n}{0}\colref{b}{n}{1}.
	\end{align*}
	It is immediate to see that 
	\begin{align*}
		\gamma' & = \colref{1}{2}{0}\cdots\colref{(a\!-\!1)}{a}{0}\colref{(a\!+\!1)}{(a\!+\!2)}{0}\cdots\colref{(b\!-\!1)}{b}{0}\\
		& \kern1cm\colref{(b\!+\!1)}{(b\!+\!2)}{0}\cdots\colref{(n\!-\!2)}{(n\!-\!1)}{0}\colref{a}{(n\!-\!1)}{d-1}\\
		& \kern1cm\colref{b}{n}{0}\colref{b}{n}{1}
	\end{align*}
	is the unique increasing reduced $T$-word of $\gamma'$ and hence has to correspond to $t_{1}t_{2}\cdots t_{n-1}$. However, we have for instance
	$\colref{b}{n}{1}\succ_{\gamma}\colref{a}{b}{d-1}=t_{n}$, which contradicts the assumption that $t_{1}t_{2}\cdots t_{n}$ is increasing.
	
	(iii) Let $t=\colref{a}{n}{s}$, where $1\leq a<n-1$ and $0\leq s<d$. Lemma~\ref{lem:gamma_coatoms} implies that we can write 
	\begin{displaymath}
		\gamma' = \bigl(\!\bigl(\colint{1}{0}\;\colint{2}{0}\;\ldots\;\colint{a}{0}\;\colint{n}{s-1}\colint{(a\!+\!1)}{d-1}\;
			\colint{(a\!+\!2)}{d-1}\;\ldots\;\colint{(n\!-\!1)}{d-1}\bigr)\!\bigr).
	\end{displaymath}
	In view of Proposition~\ref{prop:gddn_symmetric_irreducible} there exists a unique increasing reduced $T$-word of $\gamma$, namely
	\begin{multline*}
		\gamma' = \colref{1}{2}{0}\cdots\colref{(a\!-\!1)}{a}{0}\colref{(a\!+\!1)}{(a\!+\!2)}{0}\cdots\\
			\colref{(n\!-\!2)}{(n\!-\!1)}{0}\colref{a}{n}{s-1}\colref{(n\!-\!1)}{n}{s},
	\end{multline*}
	and this word has to correspond to $t_{1}t_{2}\cdots t_{n-1}$. However, we have for instance
	$\colref{(n\!-\!1)}{n}{s}\succ_{\gamma}\colref{a}{n}{s}=t_{n}$, which contradicts the assumption that $t_{1}t_{2}\cdots t_{n}$ is increasing.
	
	(iv) Let $t=\colref{(n\!-\!1)}{n}{s}$, where $0\leq s<d$. It follows that $s\neq 1$, because otherwise $t_{n}=s_{n}$. 
	Lemma~\ref{lem:gamma_coatoms} implies
	that we can write 
	\begin{displaymath}
		\gamma' = \bigl(\!\bigl(\colint{1}{0}\;\colint{2}{0}\;\ldots\;\colint{(n\!-\!1)}{0}\;\colint{n}{s-1}\bigr)\!\bigr).
	\end{displaymath}
	In view of Proposition~\ref{prop:gddn_symmetric_irreducible} there exists a unique increasing reduced $T$-word of $\gamma$, namely
	\begin{displaymath}
		\gamma' = \colref{1}{2}{0}\cdots\colref{(n\!-\!2)}{(n\!-\!1)}{0}\colref{(n\!-\!1)}{n}{s-1},
	\end{displaymath}
	and this word has to correspond to $t_{1}t_{2}\cdots t_{n-1}$. However, since $s\neq 1$, we have for instance 
	$\colref{(n\!-\!1)}{n}{s-1}\succ_{\gamma}\colref{(n\!-\!1)}{n}{s}=t_{n}$, which contradicts the assumption that $t_{1}t_{2}\cdots t_{n}$ is increasing.
	
	Hence $\gamma=s_{1}s_{2}\cdots s_{n}$ is the unique increasing reduced $T$-word of $\gamma$.
\end{proof}

The following corollary is immediate.

\begin{corollary}\label{cor:gddn_gend_reducible}
	Let $w\leq_{T}\gamma$ such that the parabolic subgroup $W$ of $G(d,d,n)$, in which $w$ is a Coxeter element, is reducible. There exists a 
	unique increasing reduced $T$-word of $w$ with respect to the restriction of $\prec_{\gamma}$ to the reflections in 
	$T_{\gamma}\cap[\varepsilon,w]$.
\end{corollary}
\begin{proof}
	Since $W$ is reducible, we can write $W=W_{1}\times W_{2}\times\cdots\times W_{l}$ for some $l$. It follows for instance from 
	\cite{broue98complex}*{Fact~1.7} and \cite{broue98complex}*{Table~2} that at most one $W_{i}$ is isomorphic to $G(d,d,n')$ for some $n'<n$, and the 
	other $W_{j}$ are isomorphic to $G(1,1,n_{j})$ for $n_{j}\leq n$. The proof works analogously to the proofs of 
	Corollary~\ref{cor:gddn_symmetric_reducible} and Proposition~\ref{prop:gddn_gend_irreducible}.
\end{proof}

Now we have collected all the ingredients for the proof of Theorem~\ref{thm:shellability_gddn}.

\begin{proof}[Proof of Theorem~\ref{thm:shellability_gddn}]
	We need to show that under the given assumptions in every interval $[u,v]$ of $\nc{G(d,d,n)}(\gamma)$ there exists a unique increasing maximal chain, 
	and this maximal chain is lexicographically first. In view of Lemma~\ref{lem:interval_isomorphism}, it suffices to consider intervals of the form
	$[\varepsilon,w]$, and Proposition~\ref{prop:increasing_chain} implies that the lexicographically first maximal chain in $[\varepsilon,w]$ is 
	increasing. Now, 
	Propositions~\ref{prop:gddn_symmetric_irreducible} and \ref{prop:gddn_gend_irreducible} as well as 
	Corollaries~\ref{cor:gddn_symmetric_reducible} and \ref{cor:gddn_gend_reducible} imply together with Lemma~\ref{lem:reduced_word_maximal_chain}
	that there is exactly one increasing maximal chain in $[\varepsilon,w]$, and we are done.
\end{proof}

\begin{example}\label{ex:g553_1}
	Let us consider the group $G(5,5,3)$. The Coxeter element $\gamma$ according to \eqref{eq:coxeter_gddn} is 
	$\gamma=\bigl[\colint{1}{0}\;\colint{2}{0}\bigr]\bigl[\colint{3}{0}\bigr]_{d-1}$, and the reflections in $T_{\gamma}$ 
	are 
	\begin{displaymath}\begin{aligned}
		& \colref{1}{2}{0}, && \colref{1}{2}{4}, && \colref{1}{3}{0}, && \colref{1}{3}{1}, \\
		& \colref{1}{3}{2}, && \colref{1}{3}{3}, && \colref{1}{3}{4}, && \colref{2}{3}{0}, \\
		& \colref{2}{3}{1}, && \colref{2}{3}{2}, && \colref{2}{3}{3}, && \colref{2}{3}{4}.
	\end{aligned}\end{displaymath}
	The total order of $T_{\gamma}$ according to \eqref{eq:order_gddn} is 
	\begin{align}\label{eq:order_g553}
		\colref{1}{2}{0} & \prec\colref{1}{3}{0}\prec\colref{1}{3}{4}\prec\colref{1}{3}{3}\prec\colref{1}{3}{2}\\
		&	\prec\colref{1}{3}{1}\prec\colref{1}{2}{4}\prec\colref{2}{3}{0}\prec\colref{2}{3}{4}\nonumber\\ 
		& \prec\colref{2}{3}{3}\prec\colref{2}{3}{2}\prec\colref{2}{3}{1}.\nonumber
	\end{align}
	Figure~\ref{fig:nc_g553} shows the lattice $\nc{G(5,5,3)}$. The given integer labeling is derived from 
	$\lambda_{\gamma}$ by mapping every reflection to its position in the total order given in
	\eqref{eq:order_g553}. The nodes of this lattice are labeled by products of integers, which 
	correspond to products of the corresponding reflections under the mapping explained before. For instance,
	the label $1\!\cdot\!10$ represents the product $\colref{1}{2}{0}\colref{2}{3}{3}$. We can quickly check that this 
	is indeed an EL-labeling, where the unique increasing chain in the interval $[\varepsilon,\gamma]$ is indicated by 
	thick edges. 
	
	\begin{figure}
		\centering
		\begin{tikzpicture}\small
			\def\x{1.1};
			\def\y{4};
			\draw(6.5*\x,.5*\y) node(n1){$\varepsilon$};
			\draw(1*\x,1*\y) node(n2){$8$};
			\draw(2*\x,1*\y) node(n3){$2$};
			\draw(3*\x,1*\y) node(n4){$7$};
			\draw(4*\x,1*\y) node(n5){$3$};
			\draw(5*\x,1*\y) node(n6){$12$};
			\draw(6*\x,1*\y) node(n7){$9$};
			\draw(7*\x,1*\y) node(n8){$6$};
			\draw(8*\x,1*\y) node(n9){$4$};
			\draw(9*\x,1*\y) node(n10){$11$};
			\draw(10*\x,1*\y) node(n11){$1$};
			\draw(11*\x,1*\y) node(n12){$10$};
			\draw(12*\x,1*\y) node(n13){$5$};
			\draw(1*\x,2*\y) node(n14){$1\!\cdot\!8$};
			\draw(2*\x,2*\y) node(n15){$3\!\cdot\!8$};
			\draw(3*\x,2*\y) node(n16){$2\!\cdot\!6$};
			\draw(4*\x,2*\y) node(n17){$2\!\cdot\!12$};
			\draw(5*\x,2*\y) node(n18){$1\!\cdot\!9$};
			\draw(6*\x,2*\y) node(n19){$1\!\cdot\!12$};
			\draw(7*\x,2*\y) node(n20){$4\!\cdot\!9$};
			\draw(8*\x,2*\y) node(n21){$6\!\cdot\!11$};
			\draw(9*\x,2*\y) node(n22){$1\!\cdot\!10$};
			\draw(10*\x,2*\y) node(n23){$8\!\cdot\!12$};
			\draw(11*\x,2*\y) node(n24){$1\!\cdot\!11$};
			\draw(12*\x,2*\y) node(n25){$5\!\cdot\!10$};
			\draw(6.5*\x,2.5*\y) node(n26){$1\!\cdot\!8\!\cdot\!12$};	
			\draw(n1) -- (n2) node[fill=white] at(3.2*\x,.8*\y){\tiny $8$};
			\draw(n1) -- (n3) node[fill=white] at(3.8*\x,.8*\y){\tiny $2$};
			\draw(n1) -- (n4) node[fill=white] at(4.4*\x,.8*\y){\tiny $7$};
			\draw(n1) -- (n5) node[fill=white] at(5*\x,.8*\y){\tiny $3$};
			\draw(n1) -- (n6) node[fill=white] at(5.6*\x,.8*\y){\tiny $12$};
			\draw(n1) -- (n7) node[fill=white] at(6.2*\x,.8*\y){\tiny $9$};
			\draw(n1) -- (n8) node[fill=white] at(6.8*\x,.8*\y){\tiny $6$};
			\draw(n1) -- (n9) node[fill=white] at(7.4*\x,.8*\y){\tiny $4$};
			\draw(n1) -- (n10) node[fill=white] at(8*\x,.8*\y){\tiny $11$};
			\draw[very thick](n1) -- (n11) node[fill=white] at(8.6*\x,.8*\y){\tiny $1$};
			\draw(n1) -- (n12) node[fill=white] at(9.2*\x,.8*\y){\tiny $10$};
			\draw(n1) -- (n13) node[fill=white] at(9.8*\x,.8*\y){\tiny $5$};
			\draw(n2) -- (n14) node[fill=white] at(1*\x,1.5*\y){\tiny $2$};
			\draw(n2) -- (n15) node[fill=white] at(1.25*\x,1.25*\y){\tiny $7$};
			\draw(n2) -- (n23) node[fill=white] at(1.55*\x,1.06*\y){\tiny $12$};
			\draw(n3) -- (n14) node[fill=white] at(1.25*\x,1.75*\y){\tiny $1$};
			\draw(n3) -- (n16) node[fill=white] at(2.4*\x,1.4*\y){\tiny $6$};
			\draw(n3) -- (n17) node[fill=white] at(3.2*\x,1.6*\y){\tiny $12$};
			\draw(n4) -- (n15) node[fill=white] at(2.4*\x,1.6*\y){\tiny $3$};
			\draw(n4) -- (n17) node[fill=white] at(3.15*\x,1.15*\y){\tiny $2$};
			\draw(n4) -- (n20) node[fill=white] at(4*\x,1.27*\y){\tiny $4$};
			\draw(n4) -- (n21) node[fill=white] at(5*\x,1.38*\y){\tiny $6$};
			\draw(n4) -- (n25) node[fill=white] at(11.45*\x,1.94*\y){\tiny $5$};
			\draw(n5) -- (n15) node[fill=white] at(3.2*\x,1.4*\y){\tiny $8$};
			\draw(n5) -- (n16) node[fill=white] at(3.15*\x,1.85*\y){\tiny $2$};
			\draw(n5) -- (n18) node[fill=white] at(4.75*\x,1.75*\y){\tiny $1$};
			\draw(n6) -- (n17) node[fill=white] at(4.75*\x,1.25*\y){\tiny $7$};
			\draw(n6) -- (n19) node[fill=white] at(5.15*\x,1.15*\y){\tiny $6$};
			\draw(n6) -- (n23) node[fill=white] at(8*\x,1.62*\y){\tiny $11$};
			\draw(n7) -- (n18) node[fill=white] at(5.15*\x,1.85*\y){\tiny $3$};
			\draw(n7) -- (n20) node[fill=white] at(6.1*\x,1.1*\y){\tiny $7$};
			\draw(n7) -- (n23) node[fill=white] at(9*\x,1.73*\y){\tiny $8$};
			\draw(n8) -- (n16) node[fill=white] at(4*\x,1.73*\y){\tiny $5$};
			\draw(n8) -- (n19) node[fill=white] at(6.1*\x,1.9*\y){\tiny $1$};
			\draw(n8) -- (n21) node[fill=white] at(7.85*\x,1.85*\y){\tiny $11$};
			\draw(n9) -- (n16) node[fill=white] at(5*\x,1.62*\y){\tiny $3$};
			\draw(n9) -- (n20) node[fill=white] at(7.85*\x,1.15*\y){\tiny $9$};
			\draw(n9) -- (n22) node[fill=white] at(8.25*\x,1.25*\y){\tiny $1$};
			\draw(n10) -- (n21) node[fill=white] at(8.25*\x,1.75*\y){\tiny $7$};
			\draw(n10) -- (n23) node[fill=white] at(9.85*\x,1.85*\y){\tiny $10$};
			\draw(n10) -- (n24) node[fill=white] at(9.8*\x,1.4*\y){\tiny $5$};
			\draw[very thick](n11) -- (n14) node[fill=white] at(1.55*\x,1.94*\y){\tiny $8$};
			\draw(n11) -- (n18) node[fill=white] at(8*\x,1.38*\y){\tiny $9$};
			\draw(n11) -- (n19) node[fill=white] at(9*\x,1.27*\y){\tiny $12$};
			\draw(n11) -- (n22) node[fill=white] at(9.85*\x,1.15*\y){\tiny $10$};
			\draw(n11) -- (n24) node[fill=white] at(10.6*\x,1.6*\y){\tiny $11$};
			\draw(n12) -- (n22) node[fill=white] at(9.8*\x,1.6*\y){\tiny $4$};
			\draw(n12) -- (n23) node[fill=white] at(10.6*\x,1.4*\y){\tiny $9$};
			\draw(n12) -- (n25) node[fill=white] at(11.75*\x,1.75*\y){\tiny $7$};
			\draw(n13) -- (n16) node[fill=white] at(11.45*\x,1.06*\y){\tiny $4$};
			\draw(n13) -- (n24) node[fill=white] at(11.75*\x,1.25*\y){\tiny $1$};
			\draw(n13) -- (n25) node[fill=white] at(12*\x,1.5*\y){\tiny $10$};
			\draw[very thick](n14) -- (n26) node[fill=white] at(3.2*\x,2.2*\y){\tiny $12$};
			\draw(n15) -- (n26) node[fill=white] at(3.8*\x,2.2*\y){\tiny $2$};
			\draw(n16) -- (n26) node[fill=white] at(4.4*\x,2.2*\y){\tiny $1$};
			\draw(n17) -- (n26) node[fill=white] at(5*\x,2.2*\y){\tiny $6$};
			\draw(n18) -- (n26) node[fill=white] at(5.6*\x,2.2*\y){\tiny $8$};
			\draw(n19) -- (n26) node[fill=white] at(6.2*\x,2.2*\y){\tiny $11$};
			\draw(n20) -- (n26) node[fill=white] at(6.8*\x,2.2*\y){\tiny $3$};
			\draw(n21) -- (n26) node[fill=white] at(7.4*\x,2.2*\y){\tiny $5$};
			\draw(n22) -- (n26) node[fill=white] at(8*\x,2.2*\y){\tiny $9$};
			\draw(n23) -- (n26) node[fill=white] at(8.6*\x,2.2*\y){\tiny $7$};
			\draw(n24) -- (n26) node[fill=white] at(9.2*\x,2.2*\y){\tiny $8$};
			\draw(n25) -- (n26) node[fill=white] at(9.8*\x,2.2*\y){\tiny $4$};
		\end{tikzpicture}
		\caption{The lattice of noncrossing partitions of $G(5,5,3)$. The integer labels correspond to the position 
			of the reflections in \eqref{eq:order_g553}.}
		\label{fig:nc_g553}
	\end{figure}
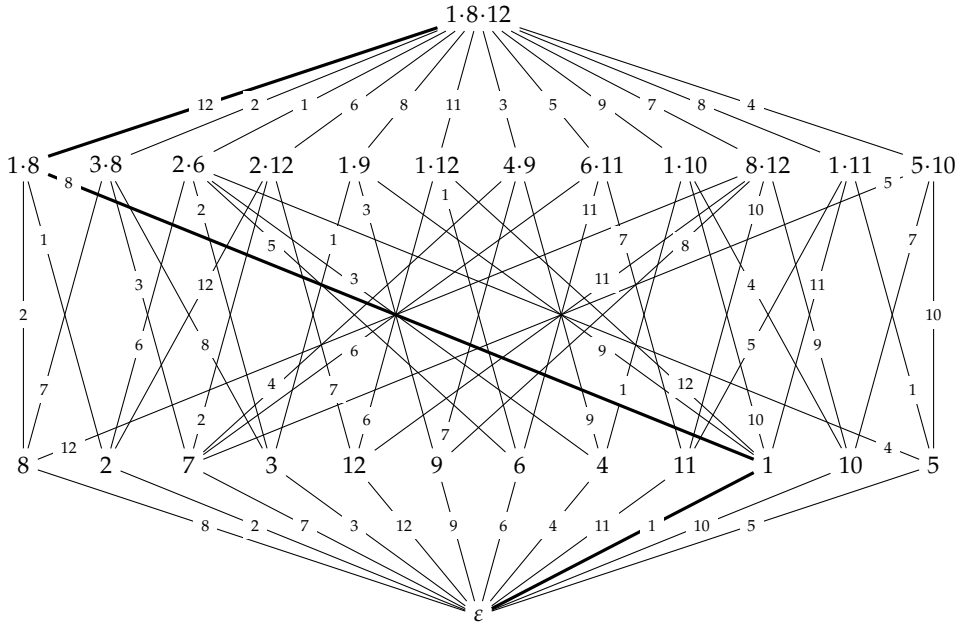
\end{example}

\section{EL-Shellability of $\nc{W}$ for the Exceptional Well-Generated Complex Reflection Groups}
  \label{sec:shellability_exceptional}
As remarked in the beginning of Section~\ref{sec:gddn_labeling}, an analogue of the edge-labeling of $\nc{G(d,d,n)}$ used in 
the proof of Theorem~\ref{thm:shellability_gddn} was already used by Athanasiadis, Brady and Watt in 
\cite{athanasiadis07shellability}, and it has a natural connection to the definition of the absolute order. We can 
define this labeling more generally for every well-generated complex reflection group $W$, and every Coxeter element 
$\gamma\in W$ by
\begin{equation}\label{eq:ncw_labeling}
	\lambda_{\gamma}:\mathcal{E}\bigl(\nc{W}(\gamma)\bigr)\to T_{\gamma},\quad (u,v)\mapsto u^{-1}v,
\end{equation} 
where $T_{\gamma}$ denotes the set of reflections of $W$ which are contained in $\nc{W}(\gamma)=[\varepsilon,\gamma]$. In 
this section, we provide explicit orders of $T_{\gamma}$ for $\gamma$ a Coxeter element of an exceptional
well-generated complex reflection group $W$ such that $\lambda_{\gamma}$ is an EL-labeling of the corresponding 
noncrossing partition lattice. 

It turns out that the noncrossing partition lattice of most of these groups is isomorphic
to the noncrossing partition lattice of some real reflection group. Only five groups, namely $G_{24},G_{27},G_{29},G_{33}$
and $G_{34}$, remain unrelated to any known case. For these cases, we have proven EL-shellability by means of a computer
program, called \textsc{Lins}, see \cite{lins}. Given a well-generated complex reflection group $W$, \textsc{Lins} takes 
an arbitrary order of the reflections of $W$ which are contained in $\nc{W}$, and checks which rank two intervals have more
than one increasing chain with respect to this order. It subsequently adapts the order such that only one increasing
chain remains, and checks that the labeling from \eqref{eq:ncw_labeling} is indeed an EL-labeling of $\nc{W}$. However, 
this algorithm is not 
deterministic, meaning that different runs of \textsc{Lins} may produce different orders. It uses Michel's
GAP-distribution \cite{gap}, and Borchmann's FCA-tool \cite{conexpclj} for computing the chains of the lattice.
For more information on Formal Concept Analysis (FCA), we refer to the standard monograph by Ganter and Wille, see 
\cite{ganter99formal}. \textsc{Lins} outputs several files, including some GAP scripts, a file containing the labeled
chains, as well as a file containing the final order of the reflections. The reflections are abstractly named
by $s_{k}$, where $k$ is an integer between $1$ and $\lvert\nc{W}\rvert$. The value $k$ that is assigned to a certain
reflection depends on the position at which GAP lists this reflection in its internal representation of the group 
elements of $W$. This naming of the reflections is deterministic, so that we can identify the actual group element 
behind the names with GAP and the respective GAP script\footnote{There is a file named \texttt{lins} included in the
zip-archive containing \textsc{Lins}. Moreover, this script can be downloaded separately from
\url{http://homepage.univie.ac.at/henri.muehle/files/lins}.}.

The main result of this section is proven in the subsequent paragraphs explicitly.
\begin{theorem}\label{thm:shellability_exceptional}
	The lattice $\nc{W}$ is EL-shellable for every exceptional well-generated complex reflection group $W$.
\end{theorem}

Before we investigate the exceptional well-generated complex reflection groups, we need one more observation.
Recall that the \alert{braid group} associated with a complex reflection group $W$, denoted by $\mathfrak{B}(W)$, is 
the fundamental group of the complement of the hyperplanes of $W$. We have the following result.
\begin{proposition}\label{prop:correspondence_shephard_groups}
	For $d,n\geq 2$, we have $\nc{G(d,1,n)}\cong\nc{B_{n}}$. Moreover, we have 
	$\nc{G_{25}}\cong\nc{A_{3}},\nc{G_{26}}\cong\nc{B_{3}}$, and $\nc{G_{32}}\cong\nc{A_{4}}$.
\end{proposition}
\begin{proof}
	It follows for instance from \cite{broue98complex}*{Table~1} that 
	$\mathfrak{B}\bigl(G(d,1,n)\bigr)\cong\mathfrak{B}(B_{n})$ for $n\geq 1$ and $d\geq 2$, as well as 
	$\mathfrak{B}(G_{25})\cong\mathfrak{B}(A_{3}),\mathfrak{B}(G_{26})\cong\mathfrak{B}(B_{3})$, and 
	$\mathfrak{B}(G_{32})\cong\mathfrak{B}(A_{4})$. Following \cite{bessis14finite}, for an irreducible 
	well-generated complex reflection group, we can view $\nc{W}$ as a poset of the so-called simple elements 
	of $\mathfrak{B}(W)$. Since the braid groups of the groups in question are isomorphic, so are their sets of
	simple elements, and the result follows.
\end{proof}

We remark that the close structural connection between the groups in 
Proposition~\ref{prop:correspondence_shephard_groups} was already observed in \cite{orlik88discriminants}.

\subsubsection*{The Groups $G_{23},G_{28},G_{30},G_{35},G_{36},G_{37}$.} These groups are the six exceptional real 
reflection groups, see \cite{broue98complex}*{p.\;6}. Hence their noncrossing partition lattices are EL-shellable
by Theorem~\ref{thm:shellability_real}.
  
\subsubsection*{The Groups $G_{25},G_{26},G_{32}$.} Proposition~\ref{prop:correspondence_shephard_groups} states that
the noncrossing partition lattices of these groups are isomorphic to noncrossing partition lattices of real reflection
groups, and their EL-shellability follows again from Theorem~\ref{thm:shellability_real}. 

\subsubsection*{The Groups $G_{4},G_{5},G_{6},G_{8},G_{9},G_{10},G_{14},G_{16},G_{17},G_{18},G_{20},G_{21}$.}
These are the exceptional well-generated complex reflection groups of rank two. Hence the corresponding lattices of
noncrossing partitions have rank two as well, and are thus isomorphic to a lattice of noncrossing partitions of a 
dihedral group, and their EL-shellability follows again from Theorem~\ref{thm:shellability_real}. 

\subsubsection*{The Groups $G_{24},G_{27},G_{29},G_{33},G_{34}$.} 

\begin{figure}
	\centering
	\begin{tabular}{c|l}
		Group & Reflection order\\
		\hline
		&\\
		$G_{24}$ & $\begin{aligned}s_{26} & \prec s_{5}\prec s_{3}\prec s_{29}\prec s_{21}
		  \prec s_{28}\prec s_{18}\prec s_{7}\prec s_{2}\prec s_{4}\prec s_{11}\prec s_{8}\\
		& \prec s_{23}\prec s_{25}\end{aligned}$\\
		&\\
		\hline
		&\\
		$G_{27}$ & $\begin{aligned}s_{23} & \prec s_{38}\prec s_{42}\prec s_{15}\prec s_{36}
		  \prec s_{29}\prec s_{33}\prec s_{27}\prec s_{18}\prec s_{13}\prec s_{4}\\ 
		&\prec s_{3}\prec s_{2}\prec s_{8}\prec s_{5}\prec s_{21}\prec s_{17}\prec s_{34}\prec s_{37}
		  \prec s_{30}\end{aligned}$\\
		&\\
		\hline
		&\\
		$G_{29}$ & $\begin{aligned}s_{101} & \prec s_{4}\prec s_{76}\prec s_{109}\prec s_{8}
		  \prec s_{105}\prec s_{64}\prec s_{47}\prec s_{6}\prec s_{33}\prec s_{68}\\
		& \prec s_{13}\prec s_{20}\prec s_{39}\prec s_{93}\prec s_{9}\prec s_{88}\prec s_{2}
		  \prec s_{70}\prec s_{28}\prec s_{110}\\
		& \prec s_{25}\prec s_{53}\prec s_{3}\prec s_{18}\end{aligned}$\\
		&\\
		\hline
		&\\
		$G_{33}$ & $\begin{aligned}s_{5} & \prec s_{13}\prec s_{7}\prec s_{33}\prec s_{56}
		  \prec s_{19}\prec s_{36}\prec s_{58}\prec s_{47}\prec s_{182}\prec s_{16}\\ 
		& \prec s_{17}\prec s_{224}\prec s_{281}\prec s_{297}\prec s_{42}\prec s_{179}\prec s_{217}
		  \prec s_{89}\prec s_{128}\\
		& \prec s_{86}\prec s_{110}\prec s_{2}\prec s_{172}\prec s_{277}\prec s_{169}\prec s_{76}
		  \prec s_{68}\prec s_{3}\prec s_{12}
		  \end{aligned}$\\
		&\\
		\hline
		&\\
		$G_{34}$ & $\begin{aligned} s_{1568} & \prec s_{937}\prec s_{1361}\prec s_{213}\prec s_{13}
		  \prec s_{142}\prec s_{669}\prec s_{888}\prec s_{58}\prec s_{7}\\
		& \prec s_{65}\prec s_{67}\prec s_{480}\prec s_{295}\prec s_{8}\prec s_{37}\prec s_{40}
		  \prec s_{256}\prec s_{714}\\
		& \prec s_{1060}\prec s_{1447}\prec s_{17}\prec s_{3}\prec s_{117}\prec s_{53}\prec s_{1252}
		  \prec s_{639}\prec s_{62}\\
		& \prec s_{6}\prec s_{702}\prec s_{915}\prec s_{1043}\prec s_{43}\prec s_{359}\prec s_{428}
		  \prec s_{23}\prec s_{4}\\
		& \prec s_{75}\prec s_{127}\prec s_{191}\prec s_{368}\prec s_{157}\prec s_{648}\prec s_{1234}
		  \prec s_{181}\prec s_{2}\\
		& \prec s_{683}\prec s_{49}\prec s_{264}\prec s_{235}\prec s_{905}\prec s_{1241}\prec s_{60}
		  \prec s_{1558}\prec s_{1353}\\
		& \prec s_{319}\end{aligned}$\\
	\end{tabular}
	\caption{Explicit reflection orders for the remaining groups that make the edge-labeling from \eqref{eq:ncw_labeling}
	  an EL-labeling of the lattice of noncrossing partitions associated with the given group.}
	\label{fig:explicit_orders}
\end{figure}

As described in the beginning of this section, we 
provide an explicit reflection order for these groups that was computed with \textsc{Lins} \cite{lins}. The abstract
encodings listed in Figure~\ref{fig:explicit_orders} can be resolved with the GAP script provided by \textsc{Lins}.
Note that the given reflection orders are just one possibility to make the edge-labeling in \eqref{eq:ncw_labeling} 
an EL-labeling. 

\medskip

Now, we are finally set to prove Theorems~\ref{thm:shellability_main} and \ref{thm:shellability_all}.
\begin{proof}[Proof of Theorem~\ref{thm:shellability_main}]
	This follows from Theorems~\ref{thm:shellability_gddn} and \ref{thm:shellability_exceptional}.
\end{proof}

\begin{proof}[Proof of Theorem~\ref{thm:shellability_all}]
	If $W$ is irreducible, then the result follows from Theorems~\ref{thm:shellability_main}, 
	\ref{thm:shellability_real} and Proposition~\ref{prop:correspondence_shephard_groups}. If $W$ is reducible, 
	then we can write $W\cong W_{1}\times W_{2}\times\cdots\times W_{k}$ where the groups $W_{i}$ are irreducible
	well-generated complex reflection groups for all $i\in\{1,2,\ldots,k\}$. Then by construction, 
	$\nc{W}\cong\nc{W_{1}}\times\nc{W_{2}}\times\cdots\times\nc{W_{k}}$, and the result follows using the first 
	part of this proof and Theorem~\ref{thm:direct_product_shellable}. 
\end{proof}

\subsection{A Uniform Approach?}
  \label{sec:uniform}
In \cite{athanasiadis07shellability}, Athanasiadis, Brady and Watt defined the concept of a $\gamma$-compatible 
reflection order, and used this concept to uniformly prove the EL-shellability of the noncrossing partition lattices 
associated with real reflection groups. However, their definition, see \cite{athanasiadis07shellability}*{Definition~3.1},
used properties of root systems, and these properties cannot be generalized to well-generated complex reflection groups. We propose
the following generalization of their definition.

\begin{definition}\label{def:compatible_order}
	Let $W$ be a well-generated complex reflection group, let $\gamma\in W$ be a Coxeter element, and let $T_{\gamma}$ denote 
	the set of reflections of $W$ which lie below $\gamma$. A total order of $T_{\gamma}$ is called \alert{$\gamma$-compatible}
	if for every rank two interval of $\nc{W}(\gamma)$ there exists a unique increasing chain with respect to the edge-labeling 
	$\lambda_{\gamma}$ from \eqref{eq:ncw_labeling}.
\end{definition}

Thus the reflection orders given in \cite{athanasiadis07shellability}*{Examples~3.2--3.4} are particular instances of our
notion of a $\gamma$-compatible reflection order, and so is the reflection order given in 
\cite{athanasiadis07shellability}*{Theorem~4.1} which implies uniformly that $\gamma$-compatible reflection orders exist for
real reflection groups. Then in view of Proposition~\ref{prop:correspondence_shephard_groups}, the existence of 
$\gamma$-compatible reflection orders follows as well for the groups $G(d,1,n)$, where $d,n\geq 3$. Furthermore, Lemma~\ref{lem:gddn_rank_2}
implies that the order given in \eqref{eq:order_gddn} is compatible with the Coxeter element from \eqref{eq:coxeter_gddn},
which implies the existence of $\gamma$-compatible reflection orders for the groups $G(d,d,n)$, where $d,n\geq 3$. For the 
remaining (exceptional) well-generated complex reflection groups, the existence of $\gamma$-compatible reflection orders can 
be verified with \textsc{Lins}. Now, computer experiments suggest the following conjecture.

\begin{conjecture}\label{conj:compatible_shellable}
	Let $W$ be a well-generated complex reflection group, and let $\gamma\in W$ be a Coxeter element. Let $T_{\gamma}$ denote 
	the reflections in $W$ which are below $\gamma$ in absolute order. If $\prec$ is a $\gamma$-compatible reflection 
	order of $T_{\gamma}$, then $\lambda_{\gamma}$ as defined in \eqref{eq:ncw_labeling} is an EL-labeling of $\nc{W}(\gamma)$.
\end{conjecture}

In particular, if this conjecture was true, it would imply that it is sufficient to check the rank two intervals in order to
derive an EL-labeling of $\nc{W}$. This would yield an immense decrease in the running time of the computation of such a 
labeling.

\begin{remark}
	Substantial progress to an affirmative solution of Conjecture~\ref{conj:compatible_shellable} was recently made by the 
	author in \cite{muehle14on}. In this article it was shown that every $\gamma$-compatible reflection order is a so-called
	recursive atom order of $\nc{W}$. This implies in particular that these lattices are CL-shellable. CL-shellability is a 
	kind of lexicographic shellability (possibly) different from EL-shellability. However, CL-shellability still implies all the
	topological and structural properties that EL-shellable posets enjoy. See \cite{bjorner83lexicographically} for
	more background on CL-shellability and recursive atom orders. It shall be remarked that the reasoning in \cite{muehle14on} is
	almost uniform, \ie the main argument does not depend on a case-by-case analysis, but it relies on two results for which
	no uniform proof is available.
\end{remark}

\section{EL-Shellability of $m$-Divisible Noncrossing Partitions}
  \label{sec:shellability_generalized}
In this section, we prove Theorem~\ref{thm:generalized_shellability_all}. To do so, we recall briefly the main idea 
of the proof of \cite{armstrong09generalized}*{Theorem~3.7.2}, which proves the analogous statement for real reflection
groups. So, for now assume that $W$ is finite a real reflection group, and let $\gamma\in W$ be a Coxeter element. Let 
$T_{\gamma}$ denote the set of all reflections of $W$ that are contained in $\nc{W}(\gamma)$, and fix a 
total order $\prec$ of $T_{\gamma}$ such that $\lambda_{\gamma}$ is an EL-labeling. Write 
$T_{\gamma}=\{t_{1},t_{2},\ldots,t_{N}\}$, where $t_{i}\prec t_{j}$ for $1\leq i<j\leq N$. First we need an 
EL-labeling for $\nc{W^{m}}$, where $W^{m}$ is the $m$-fold direct product of $W$ with itself. For $i\in\{1,2,\ldots,m\}$ 
and $j\in\{1,2,\ldots,N\}$, define a vector 
$\mathbf{t}_{i,j}=(\varepsilon,\varepsilon,\ldots,\varepsilon,t_{j},\varepsilon,\varepsilon,\ldots,\varepsilon)^{\tp}$, 
where $t_{j}\in T_{\gamma}$ appears in the $i$-th entry and $\varepsilon\in W$ denotes the identity. Define the set
$T_{\gamma}^{m}=\{\mathbf{t}_{i,j}\mid 1\leq i\leq m,1\leq j\leq N\}$, and consider the edge-labeling
\begin{multline*}
	\lambda_{\gamma}^{m}:\mathcal{E}(\nc{W^{m}})\to T_{\gamma}^{m},\\
	  \bigl((u_{1},u_{2},\ldots,u_{m}),(v_{1},v_{2},\ldots,v_{m})\bigr)\mapsto
	  \bigl(\lambda_{\gamma}(u_{1},v_{1}),\lambda_{\gamma}(u_{2},v_{2}),\ldots,\lambda_{\gamma}(u_{m},v_{m})\bigr),
\end{multline*}
where we use the additional convention that $\lambda_{\gamma}(w,w)=\varepsilon$ for $w\in\nc{W}$. Hence $T_{\gamma}^{m}$ is the
set of edge-labels of $\nc{W^{m}}$ with respect to $\lambda_{\gamma}^{m}$. Consider the total order $\prec_{m}$ of 
$T_{\gamma}^{m}$ given by
\begin{multline*}
	\mathbf{t}_{1,1}\prec_{m}\mathbf{t}_{1,2}\prec_{m}\cdots\prec_{m}\mathbf{t}_{1,N}\prec_{m}\mathbf{t}_{2,1}\\
	  \prec_{m}\mathbf{t}_{2,2}\prec_{m}\cdots\prec_{m}\mathbf{t}_{2,N}\prec_{m}\mathbf{t}_{3,1}\prec_{m}
	  \cdots\prec_{m}\mathbf{t}_{m,N}.
\end{multline*}
Then Theorem~\ref{thm:direct_product_shellable} implies that $\lambda_{\gamma}^{m}$ is an EL-labeling of $\nc{W^{m}}$. 

Lemma~3.4.3 in \cite{armstrong09generalized} implies that $\gnc{W}{m}$ is an order filter in $\nc{W^{m}}$. Thus
$\lambda_{\gamma}^{m}$ restricts to an edge-labeling of $\gnc{W}{m}$. Recall that $\bnc{W}{m}=\gnc{W}{m}\cup\{\hat{0}\}$ is the 
lattice that arises from $\gnc{W}{m}$ by adding a least element $\hat{0}$. Armstrong and Thomas introduce an abstract 
symbol $\delta$, and define an edge-labeling $\lambda_{\gamma}^{(m)}$ of $\bnc{W}{m}$ as follows: let 
$T_{\gamma}^{(m)}=T_{\gamma}^{m}\cup\{\delta\}$, and define
\begin{displaymath}
	\lambda_{\gamma}^{(m)}:\mathcal{E}\bigl(\bnc{W}{m}\bigr)\to T_{\gamma}^{(m)},\quad
	  (u,v)\mapsto\begin{cases}\delta, & \text{if}\;u=\hat{0},\\ \lambda_{\gamma}^{m}(v,u), & \text{otherwise}.\end{cases}
\end{displaymath}

Subsequently, they show that the total order $\prec_{(m)}$ of $T_{\gamma}^{(m)}$ given by 
\begin{multline}\label{eq:generalized_reflection_order}
	\mathbf{t}_{m,N}\prec_{(m)}\mathbf{t}_{m,N-1}\prec_{(m)}\cdots\prec_{(m)}\mathbf{t}_{m,1}\prec_{(m)}\mathbf{t}_{m-1,N}
		\prec_{(m)}\mathbf{t}_{m-1,N-1}\\
	\prec_{(m)}\cdots\prec_{(m)}\mathbf{t}_{2,1}\prec_{(m)}\delta\prec_{(m)}\mathbf{t}_{1,N}\prec_{(m)}\mathbf{t}_{1,N-1}
		\prec_{(m)}\cdots\prec_{(m)}\mathbf{t}_{1,1},
\end{multline}
makes $\lambda_{\gamma}^{(m)}$ an EL-labeling of $\bnc{W}{m}$. (Note that Armstrong and Thomas originally considered
the dual lattice of $\bnc{W}{m}$.)

\begin{proof}[Proof of Theorem~\ref{thm:generalized_shellability_all}]
	We observe that the construction described in the beginning of this section uses only structural properties of
	the poset $\gnc{W}{m}$ which can be generalized straightforwardly from real reflection groups to complex
	reflection groups. Theorem~\ref{thm:shellability_all} implies that for every well-generated complex 
	reflection group $W$ and for every Coxeter element $\gamma\in W$, we can find a total order on $T_{\gamma}$
	such that $\lambda_{\gamma}$ is an EL-labeling. Thus we can construct an EL-labeling $\lambda_{\gamma}^{(m)}$ of 
	$\bnc{W}{m}$ analogously to the construction of Armstrong and Thomas. 
\end{proof}

\begin{example}
  \label{ex:g553_2}
	Let us again consider the group $G(5,5,3)$, and identify the reflections in $T_{\gamma}$ with their position 
	in \eqref{eq:order_g553}, and consider the set $T_{\gamma}^{(2)}$. The total order 
	of $T_{\gamma}^{(2)}$ according to \eqref{eq:generalized_reflection_order} is
	\begin{displaymath}\begin{aligned}
		& (\varepsilon,12) && \prec_{(2)}(\varepsilon,11) && \prec_{(2)}(\varepsilon,10) && \prec_{(2)}(\varepsilon,9)
			&& \prec_{(2)}(\varepsilon,8) && \prec_{(2)}(\varepsilon,7) \\
		& && \prec_{(2)}(\varepsilon,6) && \prec_{(2)}(\varepsilon,5) && \prec_{(2)}(\varepsilon,4) 
			&& \prec_{(2)}(\varepsilon,3) && \prec_{(2)}(\varepsilon,2) \\
		& && \prec_{(2)}(\varepsilon,1) && \prec_{(2)}\delta && \prec_{(2)}(12,\varepsilon) && \prec_{(2)}(11,\varepsilon) 
			&& \prec_{(2)}(10,\varepsilon) \\
		& && \prec_{(2)}(9,\varepsilon) && \prec_{(2)}(8,\varepsilon) && \prec_{(2)}(7,\varepsilon) 
			&& \prec_{(2)}(6,\varepsilon) && \prec_{(2)}(5,\varepsilon)\\
		& && \prec_{(2)}(4,\varepsilon) && \prec_{(2)}(3,\varepsilon) && \prec_{(2)}(2,\varepsilon)
		    && \prec_{(2)}(1,\varepsilon).
	\end{aligned}\end{displaymath}
	The tuple $(\varepsilon,8)$, for instance, represents the tuple $\Bigl(\varepsilon,\colref{2}{3}{0}\Bigr)$.
	Figures~\ref{fig:nc2_g553_1} and \ref{fig:nc2_g553_2} display two intervals of $\bnc{G(5,5,3)}{2}$ with the
	EL-labeling $\lambda^{(2)}$. The nodes in each of these lattices are labeled by tuples which correspond to 
	$2$-divisible noncrossing partitions of $G(5,5,3)$ analogously to the labeling of the nodes of $\nc{G(5,5,3)}$
	in Figure~\ref{fig:nc_g553}. In each figure, the unique increasing maximal chain is indicated by thick edges. 
  
	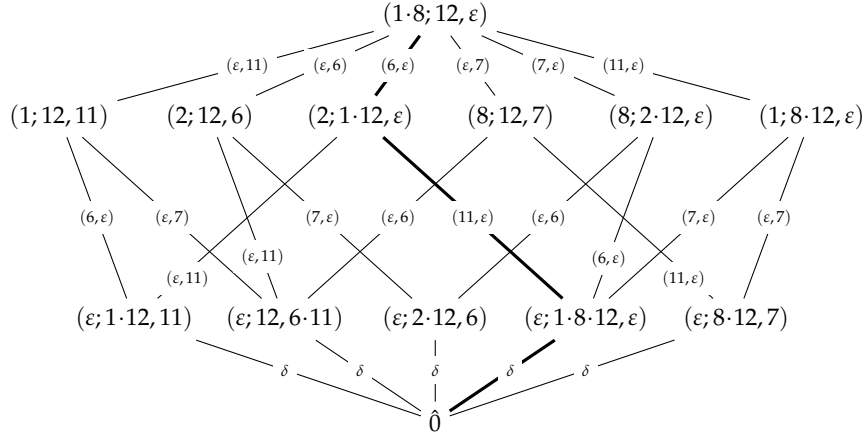
\begin{figure}
		\centering
		\begin{tikzpicture}\small
			\def\x{2};
			\def\y{2.7};
			\draw(3.5*\x,.5*\y) node(v0){$\hat{0}$};
			\draw(1.5*\x,1*\y) node(v1){$(\varepsilon;1\!\cdot\!12,11)$};
			\draw(2.5*\x,1*\y) node(v2){$(\varepsilon;12,6\!\cdot\!11)$};
			\draw(3.5*\x,1*\y) node(v3){$(\varepsilon;2\!\cdot\!12,6)$};
			\draw(4.5*\x,1*\y) node(v4){$(\varepsilon;1\!\cdot\!8\!\cdot\!12,\varepsilon)$};
			\draw(5.5*\x,1*\y) node(v5){$(\varepsilon;8\!\cdot\!12,7)$};
			\draw(1*\x,2*\y) node(v6){$(1;12,11)$};
			\draw(2*\x,2*\y) node(v7){$(2;12,6)$};
			\draw(3*\x,2*\y) node(v8){$(2;1\!\cdot\!12,\varepsilon)$};
			\draw(4*\x,2*\y) node(v9){$(8;12,7)$};
			\draw(5*\x,2*\y) node(v10){$(8;2\!\cdot\!12,\varepsilon)$};
			\draw(6*\x,2*\y) node(v11){$(1;8\!\cdot\!12,\varepsilon)$};
			\draw(3.5*\x,2.5*\y) node(v12){$(1\!\cdot\!8;12,\varepsilon)$};
			\draw(v0) -- (v1) node[fill=white] at(2.5*\x,.75*\y){\tiny $\delta$};
			\draw(v0) -- (v2) node[fill=white] at(3*\x,.75*\y){\tiny $\delta$};
			\draw(v0) -- (v3) node[fill=white] at(3.5*\x,.75*\y){\tiny $\delta$};
			\draw[very thick](v0) -- (v4) node[fill=white] at(4*\x,.75*\y){\tiny $\delta$};
			\draw(v0) -- (v5) node[fill=white] at(4.5*\x,.75*\y){\tiny $\delta$};
			\draw(v1) -- (v6) node[fill=white] at(1.25*\x,1.5*\y){\tiny $(6,\varepsilon)$};
			\draw(v1) -- (v8) node[fill=white] at(1.85*\x,1.2*\y){\tiny $(\varepsilon,11)$};
			\draw(v2) -- (v6) node[fill=white] at(1.75*\x,1.5*\y){\tiny $(\varepsilon,7)$};
			\draw(v2) -- (v7) node[fill=white] at(2.35*\x,1.31*\y){\tiny $(\varepsilon,11)$};
			\draw(v2) -- (v9) node[fill=white] at(3.25*\x,1.5*\y){\tiny $(\varepsilon,6)$};
			\draw(v3) -- (v7) node[fill=white] at(2.75*\x,1.5*\y){\tiny $(7,\varepsilon)$};
			\draw(v3) -- (v10) node[fill=white] at(4.25*\x,1.5*\y){\tiny $(\varepsilon,6)$};
			\draw[very thick](v4) -- (v8) node[fill=white] at(3.75*\x,1.5*\y){\tiny $(11,\varepsilon)$};
			\draw(v4) -- (v10) node[fill=white] at(4.65*\x,1.3*\y){\tiny $(6,\varepsilon)$};
			\draw(v4) -- (v11) node[fill=white] at(5.25*\x,1.5*\y){\tiny $(7,\varepsilon)$};
			\draw(v5) -- (v9) node[fill=white] at(5.15*\x,1.2*\y){\tiny $(11,\varepsilon)$};
			\draw(v5) -- (v11) node[fill=white] at(5.75*\x,1.5*\y){\tiny $(\varepsilon,7)$};
			\draw(v6) -- (v12) node[fill=white] at(2.25*\x,2.25*\y){\tiny $(\varepsilon,11)$};
			\draw(v7) -- (v12) node[fill=white] at(2.8*\x,2.25*\y){\tiny $(\varepsilon,6)$};
			\draw[very thick](v8) -- (v12) node[fill=white] at(3.25*\x,2.25*\y){\tiny $(6,\varepsilon)$};
			\draw(v9) -- (v12) node[fill=white] at(3.75*\x,2.25*\y){\tiny $(\varepsilon,7)$};
			\draw(v10) -- (v12) node[fill=white] at(4.25*\x,2.25*\y){\tiny $(7,\varepsilon)$};
			\draw(v11) -- (v12) node[fill=white] at(4.75*\x,2.25*\y){\tiny $(11,\varepsilon)$};
		\end{tikzpicture}
		\caption{An interval in $\bnc{G(5,5,3)}{2}$ with the labeling $\lambda^{(2)}$. The labels are 
		  explained in Example~\ref{ex:g553_1}.}
		\label{fig:nc2_g553_1}
	\end{figure}

	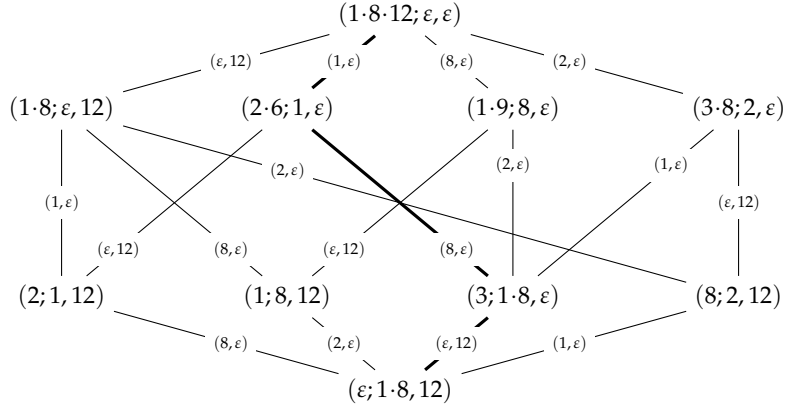
\begin{figure}
		\centering
		\begin{tikzpicture}\small
			\def\x{3};
			\def\y{2.5};
			\draw(2.5*\x,.5*\y) node(v0){$(\varepsilon;1\!\cdot\!8,12)$};
			\draw(1*\x,1*\y) node(v1){$(2;1,12)$};
			\draw(2*\x,1*\y) node(v2){$(1;8,12)$};
			\draw(3*\x,1*\y) node(v3){$(3;1\!\cdot\!8,\varepsilon)$};
			\draw(4*\x,1*\y) node(v4){$(8;2,12)$};
			\draw(1*\x,2*\y) node(v5){$(1\!\cdot\!8;\varepsilon,12)$};
			\draw(2*\x,2*\y) node(v6){$(2\!\cdot\!6;1,\varepsilon)$};
			\draw(3*\x,2*\y) node(v7){$(1\!\cdot\!9;8,\varepsilon)$};
			\draw(4*\x,2*\y) node(v8){$(3\!\cdot\!8;2,\varepsilon)$};
			\draw(2.5*\x,2.5*\y) node(v9){$(1\!\cdot\!8\!\cdot\!12;\varepsilon,\varepsilon)$};
			\draw(v0) -- (v1) node[fill=white] at(1.75*\x,.75*\y){\tiny $(8,\varepsilon)$};
			\draw(v0) -- (v2) node[fill=white] at(2.25*\x,.75*\y){\tiny $(2,\varepsilon)$};
			\draw[very thick](v0) -- (v3) node[fill=white] at(2.75*\x,.75*\y){\tiny $(\varepsilon,12)$};
			\draw(v0) -- (v4) node[fill=white] at(3.25*\x,.75*\y){\tiny $(1,\varepsilon)$};
			\draw(v1) -- (v5) node[fill=white] at(1*\x,1.5*\y){\tiny $(1,\varepsilon)$};
			\draw(v1) -- (v6) node[fill=white] at(1.25*\x,1.25*\y){\tiny $(\varepsilon,12)$};
			\draw(v2) -- (v5) node[fill=white] at(1.75*\x,1.25*\y){\tiny $(8,\varepsilon)$};
			\draw(v2) -- (v7) node[fill=white] at(2.25*\x,1.25*\y){\tiny $(\varepsilon,12)$};
			\draw[very thick](v3) -- (v6) node[fill=white] at(2.75*\x,1.25*\y){\tiny $(8,\varepsilon)$};
			\draw(v3) -- (v7) node[fill=white] at(3*\x,1.7*\y){\tiny $(2,\varepsilon)$};
			\draw(v3) -- (v8) node[fill=white] at(3.7*\x,1.7*\y){\tiny $(1,\varepsilon)$};
			\draw(v4) -- (v5) node[fill=white] at(2*\x,1.67*\y){\tiny $(2,\varepsilon)$};
			\draw(v4) -- (v8) node[fill=white] at(4*\x,1.5*\y){\tiny $(\varepsilon,12)$};
			\draw(v5) -- (v9) node[fill=white] at(1.75*\x,2.25*\y){\tiny $(\varepsilon,12)$};
			\draw[very thick](v6) -- (v9) node[fill=white] at(2.25*\x,2.25*\y){\tiny $(1,\varepsilon)$};
			\draw(v7) -- (v9) node[fill=white] at(2.75*\x,2.25*\y){\tiny $(8,\varepsilon)$};
			\draw(v8) -- (v9) node[fill=white] at(3.25*\x,2.25*\y){\tiny $(2,\varepsilon)$};
		\end{tikzpicture}
		\caption{Another interval in $\bnc{G(5,5,3)}{2}$ with the labeling $\lambda^{(2)}$. The labels are 
		  explained in Example~\ref{ex:g553_1}.}
		\label{fig:nc2_g553_2}
	\end{figure}
\end{example}
  
\section{Applications}
  \label{sec:application}
EL-shellability of a partially ordered set implies a certain structure of the associated order complex. In the present
case, this structure was already conjectured in \cite{armstrong09euler} and can now be proven. Recall that for every
positive integer $m$, the Fu{\ss}-Catalan numbers $\text{Cat}^{(m)}(W)$, see \eqref{eq:fuss_catalan}, count the 
$m$-divisible noncrossing partitions associated with a well-generated complex reflection group $W$. 
\begin{corollary}
  \label{cor:homotopy_truncated_order_complex}
	Let $W$ be a well-generated complex reflection group of rank $n$, and let $m$ be a positive integer. The order
	complex of the poset $\gnc{W}{m}$ with maximal and minimal elements removed is homotopy equivalent to a wedge
	of $(-1)^{n}\Bigl(\mbox{Cat}^{(-m-1)}(W)-\mbox{Cat}^{(-m)}(W)\Bigr)$-many $(n-2)$-spheres.
\end{corollary}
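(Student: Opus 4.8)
The plan is to deduce the statement from the EL-shellability established in \prettyref{thm:shellability_generalized} together with the standard topological consequences of shellability due to Bj\"orner and Wachs. Recall that for a bounded, graded, EL-shellable poset the order complex of its proper part is homotopy equivalent to a wedge of spheres concentrated in the top dimension, and that the number of these spheres equals the number of maximal chains whose label sequence is strictly decreasing; equivalently it is the absolute value of the reduced Euler characteristic, i.e.\ of the M\"obius number of the bounded poset (see \cite{bjorner80shellable}). So the first thing I would do is exhibit $NC_{W}^{(m)}$ with its maximal and all its minimal elements removed as the proper part of an EL-shellable bounded poset \emph{of the correct length} $n$, and then evaluate the resulting M\"obius number.

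The one genuinely delicate point is dimensional. The bounded poset $\widehat{NC}_{W}^{(m)}=NC_{W}^{(m)}\cup\{\hat{0}\}$ of \prettyref{thm:shellability_generalized} has length $n+1$: the element $\hat{0}$ sits at rank $0$, the minimal elements of $NC_{W}^{(m)}$ at rank $1$, and $(\gamma;\varepsilon,\ldots,\varepsilon)$ at rank $n+1$. Its proper part still carries the minimal elements of $NC_{W}^{(m)}$ as vertices, and would yield $(n-1)$-spheres, not $(n-2)$-spheres. To land in the right dimension I would pass to the subposet obtained by discarding exactly those minimal elements, namely $\hat{Q}=\bigl(NC_{W}^{(m)}\setminus\{\text{minimal elements}\}\bigr)\cup\{\hat{0}\}$. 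This is precisely the rank-selection of $\widehat{NC}_{W}^{(m)}$ to the interior ranks $2,3,\ldots,n$, so by the theorem of Bj\"orner and Wachs that rank-selection of an EL-shellable poset is again EL-shellable \cite{bjorner83lexicographically}, the poset $\hat{Q}$ is EL-shellable of length $n$. Since the proper part of $\hat{Q}$ is exactly $NC_{W}^{(m)}$ with its maximal and all of its minimal elements deleted, its order complex is homotopy equivalent to a wedge of $(n-2)$-spheres, which settles the homotopy type; only the number of spheres then remains.

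To count the spheres I would compute the reduced Euler characteristic $\tilde{\chi}$, which equals $\mu_{\hat{Q}}(\hat{0},\hat{1})$. The key structural input is self-similarity: using \prettyref{prop:parabolic_coxeter_elements} together with the fact that a parabolic subgroup is itself a well-generated complex reflection group, the lower order ideal below an element of rank $k$ is governed by an $m$-divisible noncrossing partition poset of a rank-$k$ parabolic subgroup, so that $\mu_{\hat{Q}}(\hat{0},y)$ depends only on the rank of $y$. Feeding this into the defining recursion for the M\"obius function, with the rank-generating numbers of $NC_{W}^{(m)}$ supplied by the Fu\ss-Catalan product \eqref{eq:fuss_catalan}, reduces the whole count to a polynomial identity in the variable that interpolates the exponent $m$. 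The values in question are all evaluations of $\prod_{i=1}^{n}\frac{xd_{n}+d_{i}}{d_{i}}$, and I would expect the alternating sum to telescope, via the reciprocity of this polynomial at negative integers, to $\tilde{\chi}=\text{Cat}^{(-m-1)}(W)-\text{Cat}^{(-m)}(W)$; the number of spheres is then the absolute value of this quantity, equal to $(-1)^{n}$ times it, which is how the displayed expression should be read (for $m=1$ one recovers the classical count $\lvert\mu_{NC_{W}}(\hat{0},\hat{1})\rvert$, consistent with the Fu\ss-Catalan reciprocity).

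I expect the enumerative last step to be the main obstacle, since matching the M\"obius number of $\hat{Q}$ with a difference of Fu\ss-Catalan numbers at negative arguments is exactly the numerical content that was conjectured in \cite{armstrong09euler}. I would stress, however, that this Euler-characteristic computation does not by itself use shellability and could be extracted from that source; the decisive new ingredient is \prettyref{thm:shellability_generalized}, which upgrades the bare alternating-sum count to a true homotopy equivalence by forcing the homology of the proper part to be free and concentrated in the top degree. Two routine but error-prone bookkeeping matters also deserve care: verifying that deleting the minimal elements really is a rank-selection (so that EL-shellability is inherited rather than merely assumed), and tracking the sign $(-1)^{n}$ that relates the reduced Euler characteristic to the positive number of spheres.
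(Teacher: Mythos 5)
Your proposal follows essentially the same route as the paper: realize the doubly-truncated poset as a rank-selection of the EL-shellable bounded poset from \prettyref{thm:shellability_generalized}, invoke the Bj\"orner rank-selection theorem to conclude shellability (and hence the wedge-of-$(n-2)$-spheres homotopy type), and identify the number of spheres with the reduced Euler characteristic, which is precisely the computation of \cite{armstrong09euler} (the paper simply cites Theorem~9 there instead of re-deriving the telescoping Fu\ss-Catalan identity you sketch). Your enumerative step is left as an expectation rather than a proof, but since you correctly observe that it can be extracted from that source and that the genuinely new ingredient is the shellability, the argument matches the paper's.
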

\begin{proof}
	Removing maximal and minimal elements from $\gnc{W}{m}$ yields a rank-selected subposet of $\gnc{W}{m}$. 
	Theorem~\ref{thm:generalized_shellability_all} and \cite{bjorner80shellable}*{Theorem~4.1} imply that this
	truncated poset is shellable. Hence the order complex associated with $\gnc{W}{m}$ with maximal and minimal 
	elements removed is also shellable. The result follows then with \cite{bjorner96shellable}*{Theorem~4.1} 
	and \cite{armstrong09euler}*{Theorem~9}.
\end{proof}

The previous result has consequences for the M{\"o}bius function of $\gnc{W}{m}$. 

\begin{corollary}
  \label{cor:mobius_function}
	Let $W$ be a well-generated complex reflection group of rank $n$, and let $\gamma\in W$ be a Coxeter element.
	Denote by $M$ the set of minimal elements of $\gnc{W}{m}(\gamma)$. Consider the lattice
	$\Bigl(\gnc{W}{m}(\gamma)\setminus M\Bigr)\cup\{\hat{0}\}$ that arises from $\gnc{W}{m}(\gamma)\setminus M$
	by adding a least element $\hat{0}$. For all positive integers $m$, we have 
	\begin{displaymath}
		\mu(\hat{0},\gamma)=\mbox{Cat}^{(-m-1)}(W)-\mbox{Cat}^{(-m)}(W),
	\end{displaymath}
	where $\mu$ denotes the corresponding M{\"o}bius function.
\end{corollary}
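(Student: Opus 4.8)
The plan is to express $\mu(\hat{0},\gamma)$ topologically and then read off its value directly from the preceding corollary. The key point is that the combinatorial quantity $\mu(\hat{0},\gamma)$ is, up to sign, the reduced Euler characteristic of an order complex whose homotopy type has just been determined.

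First I would verify that the poset $\bigl(NC_{W}^{(m)}(\gamma)\setminus M\bigr)\cup\{\hat{0}\}$ is bounded. Its unique minimal element is the freshly adjoined $\hat{0}$, and its unique maximal element is the top $(\gamma;\varepsilon,\ldots,\varepsilon)$ of $NC_{W}^{(m)}(\gamma)$, which is not a minimal element and therefore survives the deletion of $M$. Writing $\gamma$ for this top element, as in the statement, the open interval $(\hat{0},\gamma)$ then consists of exactly those elements of $NC_{W}^{(m)}(\gamma)$ that are neither minimal nor maximal; that is, it is precisely $NC_{W}^{(m)}$ with its maximal and minimal elements removed, so its order complex $\Delta$ coincides with the one studied in the previous corollary. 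Next I would invoke Philip Hall's theorem, which asserts that for any finite bounded poset the M\"obius function between its two extremal elements equals the reduced Euler characteristic of the order complex of the corresponding open interval. In our setting this yields $\mu(\hat{0},\gamma)=\tilde{\chi}(\Delta)$.

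To finish, I would use that the reduced Euler characteristic is a homotopy invariant. The preceding corollary identifies $\Delta$ up to homotopy with a wedge of $N:=\text{Cat}^{(-m-1)}(W)-\text{Cat}^{(-m)}(W)$ copies of the $(n-2)$-sphere, whose reduced homology is free of rank $N$ concentrated in degree $n-2$; hence $\tilde{\chi}(\Delta)=(-1)^{n-2}N=(-1)^{n}N$. Combining the two identities gives the asserted formula $\mu(\hat{0},\gamma)=(-1)^{n}\bigl(\text{Cat}^{(-m-1)}(W)-\text{Cat}^{(-m)}(W)\bigr)$. I do not anticipate a genuine obstacle, since the essential work -- the topological identification of $\Delta$ -- is already carried out in the previous corollary; the only points demanding care are the bookkeeping that the open interval $(\hat{0},\gamma)$ really is the doubly-truncated poset, and the tracking of the sign, where one uses $(-1)^{n-2}=(-1)^{n}$ to obtain the correct overall parity.
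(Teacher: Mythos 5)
Your argument is correct, and it reaches the conclusion by a different route than the paper. The paper's proof is a one-liner that invokes the existence of the EL-labeling for $\left(NC_{W}^{(m)}(\gamma)\setminus M\right)\cup\{\hat{0}\}$ and then defers to the proof of Theorem~1.1 of Tomie, i.e.\ it computes the M\"obius function directly from the lexicographic shelling (essentially by the standard falling-chain count attached to an EL-labeling). You instead take the topological detour: you observe that the open interval $(\hat{0},\gamma)$ is exactly $NC_{W}^{(m)}$ with its maximal and minimal elements deleted, apply Philip Hall's theorem to write $\mu(\hat{0},\gamma)$ as the reduced Euler characteristic of that order complex, and then read off $\tilde{\chi}=(-1)^{n-2}N=(-1)^{n}N$ from the wedge-of-spheres description established in the preceding corollary. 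Both proofs ultimately rest on the shellability supplied by the main theorem, but yours is self-contained modulo the previous corollary and Hall's theorem and makes the sign completely transparent, whereas the paper's is shorter on the page at the cost of an external reference; your identification of the open interval with the doubly truncated poset and the parity bookkeeping are both handled correctly. The only caveat, which you inherit from the previous corollary rather than introduce yourself, is the degenerate low-rank case where the truncated poset is empty, but this does not affect the validity of the argument in the intended range.
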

\begin{proof}
	Theorem~\ref{thm:generalized_shellability_all} implies that there exists an EL-labeling for 
	$\gnc{W}{m}(\gamma)\cup\{\hat{0}\}$ for any well-generated complex reflection group $W$ and every
	positive integer $m$. Hence the proof of this corollary works analogously to the proof of 
	\cite{tomie09mobius}*{Theorem~1.1}.
\end{proof}
  
\begin{example}
  \label{ex:g553_3}
	Let us finish the running example of $G(5,5,3)$. For $m=1$, we have 
	\begin{displaymath}
		\mbox{Cat}^{(-1)}\bigl(G(5,5,3)\bigr)=0\quad\mbox{and}\quad\mbox{Cat}^{(-2)}\bigl(G(5,5,3)\bigr)=-17
	\end{displaymath}
	
	Thus according to Corollary~\ref{cor:homotopy_truncated_order_complex}, the truncated order complex of
	$\nc{G(5,5,3)}$ is homotopy equivalent to a wedge of $17$ one-dimensional spheres, and it follows with
	\cite{bjorner96shellable}*{Theorem~5.9(i)} that there must be $17$ falling maximal chains in $\nc{G(5,5,3)}$. 
	This can be checked easily by inspecting Figure~\ref{fig:nc_g553}. 
	
	According to Corollary~\ref{cor:mobius_function}, the M{\"o}bius function of $\nc{G(5,5,3)}$ must satisfy
	$\mu(\varepsilon,\gamma)=-17$, which can again be checked by inspecting Figure~\ref{fig:nc_g553}.
\end{example}  

\section*{Acknowledgements}
  \label{sec:acknowledgements}
The author would like to thank the anonymous referees for their careful reading, and for their valuable suggestions
on presentation and content of the paper. Moreover, the author wants to thank Christian Krattenthaler for inspiring 
and helpful discussions on this topic.

\appendix

\section{The Proof of Lemma~\ref{lem:gddn_rank_2}}
  \label{app:nc_gddn_details_1}
\begin{lemma}
	Let $w\leq_{T}\gamma$ with $\ell_{T}(w)=2$. There exists a unique increasing reduced $T$-word of $w$ with respect to the restriction of 
	$\prec_{\gamma}$ to the reflections in $T_{\gamma}\cap[\varepsilon,w]$.
\end{lemma}
\begin{proof}
	Let $w=t_{1}t_{2}$ for $t_{1},t_{2}\in T_{\gamma}$. If $t_{1}$ and $t_{2}$ commute, then $w=t_{1}t_{2}=t_{2}t_{1}$ are the only possible 
	reduced $T$-words of $w$. Since $\prec_{\gamma}$ is a total order there is nothing to show.  
	Suppose that $t_{1}$ and $t_{2}$ do not commute. With the help of Proposition~\ref{prop:gamma_reflections}, we can explicitly 
	determine the possible forms of $w$. Analogously to the proof of Proposition~\ref{prop:gamma_reflections}, we investigate the fixed 
	space of $w^{-1}\gamma$ to determine which of these possibilities can actually occur in $\nc{G(d,d,n)}(\gamma)$. Recall from 
	\eqref{eq:coxeter_action} that for an arbitrary vector $\vb=(v_{1},v_{2},\ldots,v_{n})^{\tp}\in\mathbb{C}^{n}$, we have
	\begin{displaymath}
		\vb' = \gamma\vb = \Bigl(\zeta_{d}v_{n-1},v_{1},v_{2},\ldots,v_{n-2},\zeta_{d}^{d-1}v_{n}\Bigr)^{\tp}.
	\end{displaymath}
	
	(i) Let $t_{1}=\colref{a}{b}{0},t_{2}=\colref{b}{c}{0}$, where $1\leq a<b<c<n$. We have 
	$w=\bigl(\!\bigl(\colint{a}{0}\;\colint{b}{0}\;\colint{c}{0}\bigr)\!\bigr)$, and thus
	\begin{displaymath}
		w^{-1}\vb' = \Bigl(\zeta_{d}v_{n-1},v_{1},\ldots,v_{b-1},\ldots,v_{c-1},\ldots,v_{a-1},\ldots,v_{n-2},\zeta_{d}^{d-1}v_{n}\Bigr)^{\tp},
	\end{displaymath}
	and it follows that $w\leq_{T}\gamma$. Hence 
	$w^{-1}=\bigl(\!\bigl(\colint{a}{0}\;\colint{c}{0}\;\colint{b}{0}\bigr)\!\bigr)\not\leq_{T}\gamma$. The reduced $T$-words of $w$ are
	\begin{align*}
		w & = \colref{a}{b}{0}\colref{b}{c}{0}\\ 
		& = \colref{b}{c}{0}\colref{a}{c}{0}\\
		& = \colref{a}{c}{0}\colref{a}{b}{0},
	\end{align*}
	and according to \eqref{eq:order_gddn} only $w=\colref{a}{b}{0}\colref{b}{c}{0}$ is increasing.

	(ii) Let $t_{1}=\colref{a}{b}{0},t_{2}=\colref{a}{c}{0}$, where $1\leq a<b<c<n$. We have 
	$w=\bigl(\!\bigl(\colint{a}{0}\;\colint{c}{0}\;\colint{b}{0}\bigr)\!\bigr)$, and this was already considered in (i).
	
	(iii) Let $t_{1}=\colref{a}{b}{0},t_{2}=\colref{a}{b}{d-1}$, where $1\leq a<b<n$. We have $w=\bigl[\colint{a}{0}\bigr]\bigl[\colint{b}{0}\bigr]_{d-1}$,
	and 
	\begin{displaymath}
		w^{-1}\vb' = \Bigl(\zeta_{d}v_{n-1},v_{1},\ldots,\zeta_{d}v_{a-1},\ldots,\zeta_{d}^{d-1}v_{b-1},\ldots,v_{n-2},\zeta_{d}^{d-1}v_{n}\Bigr)^{\tp},
	\end{displaymath}
	and it follows that $w\not\leq_{T}\gamma$. On the other hand we have 
	\begin{displaymath}
		w\vb' = \Bigl(\zeta_{d}v_{n-1},v_{1},\ldots,\zeta_{d}^{d-1}v_{a-1},\ldots,\zeta_{d}v_{b-1},\ldots,v_{n-2},\zeta_{d}^{d-1}v_{n}\Bigr)^{\tp},
	\end{displaymath}
	and it follows again that $w^{-1}\not\leq_{T}\gamma$. 
	
	(iv) Let $t_{1}=\colref{a}{b}{0},t_{2}=\colref{b}{c}{d-1}$, where $1\leq a<b<c<n$. We have 
	$w=\bigl(\!\bigl(\colint{a}{0}\;\colint{b}{0}\;\colint{c}{d-1}\bigr)\!\bigr)$, and 
	\begin{displaymath}
		w^{-1}\vb' = \left(\zeta_{d}v_{n-1},v_{1},\ldots,v_{b-1},\ldots,\zeta_{d}v_{c-1},\ldots,\zeta_{d}^{d-1}v_{a-1},\ldots,
			v_{n-2},\zeta_{d}^{d-1}v_{n}\right)^{\tp},
	\end{displaymath}
	and it follows that $w\leq_{T}\gamma$. Hence $w^{-1}=\bigl(\!\bigl(\colint{a}{0}\;\colint{c}{d-1}\;\colint{b}{0}\bigr)\!\bigr)\not\leq_{T}\gamma$. 
	The reduced $T$-words of $w$ are
	\begin{align*}
		w & = \colref{a}{b}{0}\colref{b}{c}{d-1}\\
		& = \colref{b}{c}{d-1}\colref{a}{c}{d-1}\\
		& = \colref{a}{c}{d-1}\colref{a}{b}{0},
	\end{align*}
	and according to \eqref{eq:order_gddn} only $w=\colref{a}{b}{0}\colref{b}{c}{d-1}$ is increasing.
	
	(v) Let $t_{1}=\colref{a}{b}{0},t_{2}=\colref{a}{c}{d-1}$, where $1\leq a<b<c<n$. We have 
	$w=\bigl(\!\bigl(\colint{a}{0}\;\colint{c}{d-1}\;\colint{b}{0}\bigr)\!\bigr)$, and this was already considered in (iv).
	
	(vi) Let $t_{1}=\colref{a}{b}{0},t_{2}=\colref{b}{n}{s}$, where $1\leq a<b<n$ and $0\leq s<d$. We have 
	$w=\bigl(\!\bigl(\colint{a}{0}\;\colint{b}{0}\;\colint{n}{s}\bigr)\!\bigr)$, and
	\begin{displaymath}
		w^{-1}\vb' = \Bigl(\zeta_{d}v_{n-1},v_{1},\ldots,v_{b-1},\ldots,\zeta_{d}^{d-1-s}v_{n},\ldots,v_{n-2},\zeta_{d}^{s}v_{a-1}\Bigr)^{\tp},
	\end{displaymath}
	and it follows that $w\leq_{T}\gamma$. Hence $w^{-1}=\bigl(\!\bigl(\colint{a}{0}\;\colint{n}{s}\;\colint{b}{0}\bigr)\!\bigr)\not\leq_{T}\gamma$. 
	The reduced $T$-words of $w$ are 
	\begin{align*}
		w & = \colref{a}{b}{0}\colref{b}{n}{s}\\
		& = \colref{b}{n}{s}\colref{a}{n}{s}\\
		& = \colref{a}{n}{s}\colref{a}{b}{0},
	\end{align*}
	and according to \eqref{eq:order_gddn} only $w=\colref{a}{b}{0}\colref{b}{n}{s}$ is increasing.

	(vii) Let $t_{1}=\colref{a}{b}{0},t_{2}=\colref{a}{n}{s}$, where $1\leq a<b<n$ and $0\leq s<d$. We have 
	$w=\bigl(\!\bigl(\colint{a}{0}\;\colint{n}{s}\;\colint{b}{0}\bigr)\!\bigr)$, and this was already considered in (vi).
	
	(viii) Let $t_{1}=\colref{a}{c}{0},t_{2}=\colref{b}{c}{0}$, where $1\leq a<b<c<n$. We have 
	$w=\bigl(\!\bigl(\colint{a}{0}\;\colint{c}{0}\;\colint{b}{0}\bigr)\!\bigr)$, and 
	\begin{displaymath}
		w^{-1}\vb' = \Bigl(\zeta_{d}v_{n-1},v_{1},\ldots,v_{c-1},\ldots,v_{a-1},\ldots,v_{b-1},\ldots,v_{n-2},\zeta_{d}^{d-1}v_{n}\Bigr)^{\tp},
	\end{displaymath}
	and it follows that $w\not\leq_{T}\gamma$. On the other hand $w^{-1}=\bigl(\!\bigl(\colint{a}{0}\;\colint{b}{0}\;\colint{c}{0}\bigr)\!\bigr)$ was 
	considered in (i).
	
	(ix) Let $t_{1}=\colref{a}{c}{0},t_{2}=\colref{a}{b}{d-1}$, where $1\leq a<b<c<n$. We have 
	$w=\bigl(\!\bigl(\colint{a}{0}\;\colint{b}{d-1}\;\colint{c}{0}\bigr)\!\bigr)$, and 
	\begin{displaymath}
		w^{-1}\vb' = \Bigl(\zeta_{d}v_{n-1},v_{1},\ldots,\zeta_{d}v_{b-1},\ldots,\zeta_{d}^{d-1}v_{c-1},\ldots,v_{a-1},\ldots,
			v_{n-2},\zeta_{d}^{d-1}v_{n}\Bigr)^{\tp},
	\end{displaymath}
	and it follows that $w\not\leq_{T}\gamma$. On the other hand $w^{-1}=\bigl(\!\bigl(\colint{a}{0}\;\colint{c}{0}\;\colint{b}{d-1}\bigr)\!\bigr)$, and
	\begin{displaymath}
		w\vb' = \Bigl(\zeta_{d}v_{n-1},v_{1},\ldots,v_{c-1},\ldots,\zeta_{d}^{d-1}v_{a-1},\ldots,\zeta_{d}v_{b-1},\ldots,
			v_{n-2},\zeta_{d}^{d-1}v_{n}\Bigr)^{\tp},
	\end{displaymath}
	and it follows again that $w\not\leq_{T}\gamma$.
	
	(x) Let $t_{1}=\colref{a}{c}{0},t_{2}=\colref{b}{c}{d-1}$, where $1\leq a<b<c<n$. We have 
	$w=\bigl(\!\bigl(\colint{a}{0}\;\colint{c}{0}\;\colint{b}{1}\bigr)\!\bigr)$, and 
	\begin{displaymath}
		w^{-1}\vb' = \Bigl(\zeta_{d}v_{n-1},v_{1},\ldots,v_{c-1},\ldots,\zeta_{d}v_{a-1},\ldots,\zeta_{d}^{d-1}v_{b-1},\ldots,
			v_{n-2},\zeta_{d}^{d-1}v_{n}\Bigr)^{\tp},
	\end{displaymath}
	and it follows that $w\not\leq_{T}\gamma$. On the other hand $w^{-1}=\bigl(\!\bigl(\colint{a}{0}\;\colint{b}{1}\;\colint{c}{0}\bigr)\!\bigr)$, and 
	\begin{displaymath}
		w\vb' = \Bigl(\zeta_{d}v_{n-1},v_{1},\ldots,\zeta_{d}^{d-1}v_{b-1},\ldots,\zeta_{d}v_{c-1},\ldots,v_{a-1},\ldots,
			v_{n-2},\zeta_{d}^{d-1}v_{n}\Bigr)^{\tp},
	\end{displaymath}
	and it follows again that $w^{-1}\not\leq_{T}\gamma$. 

	(xi) Let $t_{1}=\colref{b}{c}{0},t_{2}=\colref{a}{c}{d-1}$, where $1\leq a<b<c<n$. We have 
	$w=\bigl(\!\bigl(\colint{a}{0}\;\colint{b}{d-1}\;\colint{c}{d-1}\bigr)\!\bigr)$, and
	\begin{displaymath}
		w\vb' = \Bigl(\zeta_{d}v_{n-1},v_{1},\ldots,\zeta_{d}v_{b-1},\ldots,v_{c-1},\ldots,\zeta_{d}^{d-1}v_{a-1},\ldots,
			v_{n-2},\zeta_{d}^{d-1}v_{n}\Bigr)^{\tp},
	\end{displaymath}
	and it follows that $w\leq_{T}\gamma$. Hence 
	$w^{-1}=\bigl(\!\bigl(\colint{a}{0}\;\colint{c}{d-1}\;\colint{b}{d-1}\bigr)\!\bigr)\not\leq_{T}\gamma$. The reduced $T$-words of $w$ are 
	\begin{align*}
		w & = \colref{a}{b}{d-1}\colref{b}{c}{0}\\
		& = \colref{b}{c}{0}\colref{a}{c}{d-1}\\
		& = \colref{a}{c}{d-1}\colref{a}{b}{d-1},
	\end{align*}
	and according to \eqref{eq:order_gddn} only $w=\colref{b}{c}{0}\colref{a}{c}{d-1}$ is increasing.
	
	(xii) Let $t_{1}=\colref{b}{c}{0},t_{2}=\colref{a}{b}{d-1}$, where $1\leq a<b<c<n$. We have 
	$w=\bigl(\!\bigl(\colint{a}{0}\;\colint{c}{d-1}\;\colint{b}{d-1}\bigr)\!\bigr)$, and this was already considered in (xi).

	(xiii) Let $t_{1}=\colref{a}{b}{d-1},t_{2}=\colref{b}{c}{d-1}$, where $1\leq a<b<c<n$. We have 
	$w=\bigl(\!\bigl(\colint{a}{0}\;\colint{b}{d-1}\;\colint{c}{d-2}\bigr)\!\bigr)$, and 
	\begin{displaymath}
		w^{-1}\vb' = \Bigl(\zeta_{d}v_{n-1},v_{1},\ldots,\zeta_{d}v_{b-1},\ldots,\zeta_{d}v_{c-1},\ldots,\zeta_{d}^{d-2}v_{a-1},\ldots,
			v_{n-2},\zeta_{d}^{d-1}v_{n}\Bigr)^{\tp},
	\end{displaymath}
	and it follows that $w\not\leq_{T}\gamma$. On the other hand
	$w^{-1}=\bigl(\!\bigl(\colint{a}{0}\;\colint{c}{d-2}\;\colint{b}{d-1}\bigr)\!\bigr)$, and
	\begin{displaymath}
		w\vb' = \Bigl(\zeta_{d}v_{n-1},v_{1},\ldots,\zeta_{d}^{d-2}v_{c-1},\ldots,\zeta^{d-1}_{d}v_{a-1},\ldots,\zeta_{d}^{d-1}v_{b-1},\ldots,
			v_{n-2},\zeta_{d}^{d-1}v_{n}\Bigr)^{\tp},
	\end{displaymath}
	and it follows again that $w^{-1}\not\leq_{T}\gamma$.
		
	(xiv) Let $t_{1}=\colref{a}{b}{d-1},t_{2}=\colref{a}{c}{d-1}$, where $1\leq a<b<c<n$. We have 
	$w=\bigl(\!\bigl(\colint{a}{0}\;\colint{c}{d-1}\;\colint{b}{d-1}\bigr)\!\bigr)$, and this was already considered in (xii). On the other hand
	we have $w^{-1}=\bigl(\!\bigl(\colint{a}{0}\;\colint{b}{d-1}\;\colint{c}{d-1}\bigr)\!\bigr)$, and this was already considered in (xi).
	
	(xv) Let $t_{1}=\colref{a}{b}{d-1},t_{2}=\colref{a}{n}{s}$, where $1\leq a<b<n$ and $0\leq s<d$. We have 
	$w=\bigl(\!\bigl(\colint{a}{0}\;\colint{n}{s}\;\colint{b}{d-1}\bigr)\!\bigr)$, and 
	\begin{displaymath}
		w^{-1}\vb' = \Bigl(\zeta_{d}v_{n-1},v_{1},\ldots,\zeta_{d}^{d-1-s}v_{n},\ldots,\zeta_{d}^{d-1}v_{a-1},\ldots,
			v_{n-2},\zeta_{d}^{s+1}v_{b-1}\Bigr)^{\tp},
	\end{displaymath}
	and it follows that $w\leq_{T}\gamma$. Hence $w^{-1}=\bigl(\!\bigl(\colint{a}{0}\;\colint{b}{d-1}\;\colint{n}{s}\bigr)\!\bigr)\not\leq_{T}\gamma$. 
	The reduced $T$-words of $w$ are 
	\begin{align*}
		w & = \colref{a}{n}{s}\colref{b}{n}{s+1}\\
		& = \colref{b}{n}{s+1}\colref{a}{b}{d-1}\\
		& = \colref{a}{b}{d-1}\colref{a}{n}{s},
	\end{align*}
	and according to \eqref{eq:order_gddn} only $w=\colref{a}{n}{s}\colref{b}{n}{s+1}$ is increasing.

	(xvi) Let $t_{1}=\colref{a}{b}{d-1},t_{2}=\colref{b}{n}{s}$, where $1\leq a<b<c<n$. We have 
	$w=\bigl(\!\bigl(\colint{a}{0}\;\colint{b}{d-1}\;\colint{n}{s-1}\bigr)\!\bigr)$, and this was already considered in (xv). 
	
	(xvii) Let $t_{1}=\colref{a}{c}{d-1},t_{2}=\colref{b}{c}{d-1}$, where $1\leq a<b<c<n$. We have 
	$w=\bigl(\!\bigl(\colint{a}{0}\;\colint{c}{d-1}\;\colint{b}{0}\bigr)\!\bigr)$, and this was already considered in (iv).
	
	(xviii) Let $t_{1}=\colref{a}{n}{s},t_{2}=\colref{a}{n}{t}$, where $1\leq a<n$ and $0\leq s,t<d$ with $t\neq s$. We have 
	$w=\bigl[\colint{a}{0}\bigr]_{t-s}\bigl[\colint{n}{0}\bigr]_{s-t}$, and
	\begin{displaymath}
		w^{-1}\vb' = \Bigl(\zeta_{d}v_{n-1},v_{1},\ldots,\zeta_{d}^{s-t}v_{a-1},\ldots,v_{n-2},\zeta_{d}^{t-1-s}v_{n}\Bigr)^{\tp},
	\end{displaymath}
	and it follows that $w\leq_{T}\gamma$ if and only if $t=s+1$. In this case the reduced $T$-words of $w$ are 
	\begin{align*}
		w & = \colref{a}{n}{s}\colref{a}{n}{s+1}\\
		& = \colref{a}{n}{s+1}\colref{a}{n}{s+2}\\ 
		&	= \colref{a}{n}{s+2}\colref{a}{n}{s+3}\\
		& = \quad\cdots\\
		& = \colref{a}{n}{s-1}\colref{a}{n}{s}.
	\end{align*}
	and according to \eqref{eq:order_gddn} only $w=\colref{a}{n}{0}\colref{a}{n}{1}$ is increasing.
	
	Thus the proof is complete.
\end{proof}

\section{The Proof of Proposition~\ref{prop:gddn_symmetric_irreducible}}
  \label{app:nc_gddn_details_2}
\begin{proposition}
	Let $w\leq_{T}\gamma$ such that the parabolic subgroup of $G(d,d,n)$, in which $w$ is a Coxeter element, is isomorphic to $G(1,1,n')$ for some 
	$n'\leq n$. Then $w$ is of one of the following three forms:
	\begin{enumerate}[(i)]
		\item $w=\bigl(\!\bigl(\colint{(a\!+\!1)}{0}\;\colint{(a\!+\!2)}{0}\;\ldots\;\colint{b}{0}\bigr)\!\bigr)$, where $1\leq a<b<n$,
		\item $w=\bigl(\!\bigl(\colint{1}{0}\;\colint{2}{0}\;\ldots\;\colint{a}{0}\;\colint{(b\!+\!1)}{d-1}\;\colint{(b\!+\!2)}{d-1}\;\ldots\;
			\colint{(n\!-\!1)}{d-1}\bigr)\!\bigr)$, where $1\leq a<b<n$, \quad or
		\item $w=\bigl(\!\bigl(\colint{1}{0}\;\colint{2}{0}\;\ldots\;\colint{a}{0}\;\colint{n}{s-1}\colint{(a\!+\!1)}{d-1}\;\colint{(a\!+\!2)}{d-1}\;
			\ldots\;\colint{(n\!-\!1)}{d-1}\bigr)\!\bigr)$, where $1\leq a<n$.
	\end{enumerate}
	Moreover, in each of these cases there exists a unique increasing reduced $T$-word of $w$ with respect to the restriction of $\prec_{\gamma}$ 
	to the reflections in $T_{\gamma}\cap[\varepsilon,w]$.
\end{proposition}
\begin{proof}
	The observation that $w$ can only be of the forms (i)--(iii) is a straightforward computation using 
	Proposition~\ref{prop:gamma_reflections}. For the second part of the proposition, we proceed by induction on $\ell_{T}(w)$. If 
	$\ell_{T}(w)=2$, then the claim follows from Lemma~\ref{lem:gddn_rank_2}. Suppose that $\ell_{T}(w)=k$, and suppose that the claim is true for all 
	suitable $w'$ with $\ell_{T}(w')<k$. 
	
	\smallskip
	
	(i) Let $w=\bigl(\!\bigl(\colint{(a\!+\!1)}{0}\;\colint{(a\!+\!2)}{0}\;\ldots\;\colint{b}{0}\bigr)\!\bigr)$, where $1\leq a<b<n$. Consider the 
	decomposition of $w$ according to \eqref{eq:factors_g11n}:
	\begin{displaymath}
		w = \colref{(a\!+\!1)}{(a\!+\!2)}{0}\colref{(a\!+\!2)}{(a\!+\!3)}{0}\cdots\colref{(b\!-\!1)}{b}{0}.
	\end{displaymath}
	We notice that this word is increasing with respect to \eqref{eq:order_gddn}, and the claim follows now analogously to the proof of 
	Lemma~\ref{lem:sym_shellable}. 
	
	\smallskip
	
	(ii) Let $w=\bigl(\!\bigl(\colint{1}{0}\;\colint{2}{0}\;\ldots\;\colint{a}{0}\;\colint{(b\!+\!1)}{d-1}\;\colint{(b\!+\!2)}{d-1}\;\ldots\;
	\colint{(n\!-\!1)}{d-1}\bigr)\!\bigr)$, where $1\leq a<b<n$. Again consider the decomposition of $w$ according to 
	\eqref{eq:factors_g11n}:
	\begin{multline*}
		w = \colref{1}{2}{0}\colref{2}{3}{0}\cdots\colref{(a\!-\!1)}{a}{0}\colref{a}{(b\!+\!1)}{d-1}\\
			\colref{(b\!+\!1)}{(b\!+\!2)}{0}\cdots\colref{(n\!-\!2)}{(n\!-\!1)}{0}.
	\end{multline*}
	We notice that this word is not increasing with respect to \eqref{eq:order_gddn}. However, repeated left-shifting yields
	\begin{multline}\label{eq:g11n_rising_2}
		w = \colref{1}{2}{0}\colref{2}{3}{0}\cdots\colref{(a\!-\!1)}{a}{0}\\
			\colref{(b\!+\!1)}{(b\!+\!2)}{0}\cdots\colref{(n\!-\!2)}{(n\!-\!1)}{0}\colref{a}{(n\!-\!1)}{d-1},
	\end{multline}
	and this word is increasing with respect to \eqref{eq:order_gddn}. We need to show that this is the only increasing reduced $T$-word of $w$.
	Suppose that $w=t_{1}t_{2}\cdots t_{k}$ is an increasing reduced $T$-word of $w$ that is different from \eqref{eq:g11n_rising_2}. Suppose that
	$i$ is the maximal index where this word differs from \eqref{eq:g11n_rising_2}. If $i<k$, then $t_{1}t_{2}\cdots t_{i}$ is a product of at 
	most two cycles of the form (i), and it follows that $t_{1}t_{2}\cdots t_{i}$ is increasing only if $t_{j}$ is the $j$-th factor in 
	\eqref{eq:g11n_rising_2} for all $j\in\{1,2,\ldots,i\}$, which is a contradiction. Now let $i=k$, and consider the word $w'=wt_{k}$. It follows by 
	induction hypothesis that the product of the first $k-1$ factors in \eqref{eq:g11n_rising_2} is the unique increasing reduced $T$-word of $w'$. 
	In view of Lemma~\ref{lem:shifting} and Proposition~\ref{prop:gamma_reflections} the reflection $t_{k}$ can only be of one of the 
	following four forms. \\
	(iia) Let $t_{k}=\colref{a}{(n\!-\!1)}{d-1}$. Then $t_{k}$ is the $k$-th factor in \eqref{eq:g11n_rising_2}, and we obtain a contradiction.\\
	(iib) Let $t_{k}=\colref{a}{c}{d-1}$, where $b+1\leq c<n-1$. We have 
	\begin{multline*}
		w' = \bigl(\!\bigl(\colint{1}{0}\;\colint{2}{0}\;\ldots\;\colint{a}{0}\,\colint{(c\!+\!1)}{d-1}\;\colint{(c\!+\!2)}{d-1}\;\ldots\\
			\colint{(n\!-\!1)}{d-1}\bigr)\!\bigr)\bigl(\!\bigl(\colint{(b\!+\!1)}{0}\;\colint{(b\!+\!2)}{0}\;\ldots\;\colint{c}{0}\bigr)\!\bigr).
	\end{multline*}
	Hence we can write $w'=w'_{1}w'_{2}$, where $w'_{1}$ is again of type (ii) and $w'_{2}$ is of type (i). In particular 
	$\ell_{T}(w'_{1}),\ell_{T}(w'_{2})<k$, so by induction hypothesis $w'_{1}$ and $w'_{2}$ possess a unique increasing reduced $T$-word, namely
	\begin{multline*}
		w'_{1} = \colref{1}{2}{0}\colref{2}{3}{0}\cdots\colref{(a\!-\!1)}{a}{0}\\
			\colref{(c\!+\!1)}{(c\!+\!2)}{0}\cdots\colref{(n\!-\!2)}{(n\!-\!1)}{0}\colref{a}{(n\!-\!1)}{d-1},
	\end{multline*}
	and 
	\begin{displaymath}
		w'_{2} = \colref{(b\!+\!1)}{(b\!+\!2)}{0}\colref{(b\!+\!2)}{(b\!+\!3)}{0}\cdots\colref{(c\!-\!1)}{c}{0}.
	\end{displaymath}
	Now we can quickly verify that
	\begin{multline*}
		w' = \colref{1}{2}{0}\cdots\colref{(a\!-\!1)}{a}{0}\colref{(b\!+\!1)}{(b\!+\!2)}{0}\cdots\colref{(c\!-\!1)}{c}{0}\\
			\colref{(c\!+\!1)}{(c\!+\!2)}{0}\cdots\colref{(n\!-\!2)}{(n\!-\!1)}{0}\colref{a}{(n\!-\!1)}{d-1},
	\end{multline*}
	is the unique increasing reduced $T$-word of $w'$ and hence has to correspond to $t_{1}t_{2}\cdots t_{k-1}$. (Indeed, first concatenate the increasing 
	words of $w'_{1}$ and $w'_{2}$, and observe that the resulting word is not increasing. Then shift the first factor, say $r$, of the 
	increasing word of $w'_{2}$ as far to the left as possible such that the resulting prefix, say $r_{1}r_{2}\cdots r_{l}$, is increasing, where 
	$r_{1}r_{2}\cdots r_{l-1}$ is a prefix of the increasing word of $w'_{1}$ and $r_{l}=r$. Then observe that shifting $r$ further to 
	the left yields a non-increasing prefix $r'_{1}r'_{2}\cdots r'_{l}$. Proceed analogously until you have reached the last factor of the increasing word 
	of $w'_{2}$.) However, we have for instance $\colref{a}{(n\!-\!1}{d-1}\succ_{\gamma}\colref{a}{c}{d-1}=t_{k}$, which contradicts the assumption 
	that $t_{1}t_{2}\cdots t_{k}$ is increasing.\\
	(iic) Let $t_{k}=\colref{c}{(c\!+\!1)}{0}$, where $b+1\leq c<n-1$. We have
	\begin{multline*}
		w' = \bigl(\!\bigl(\colint{1}{0}\;\ldots\;\colint{a}{0}\,\colint{(b\!+\!1)}{d-1}\;\colint{(b\!+\!2)}{d-1}\;\ldots\;
			\colint{c}{d-1}\\
			\colint{(c\!+\!2)}{d-1}\;\colint{(c\!+\!3)}{d-1}\;\ldots\;\colint{(n\!-\!1)}{d-1}\bigr)\!\bigr),
	\end{multline*}
	and $\ell_{T}(w')<k$. Moreover, $w'$ is again of type (ii), so by induction hypothesis there exists a unique increasing reduced $T$-word of $w'$, 
	namely
	\begin{align*}
		w' & = \colref{1}{2}{0}\cdots\colref{(a\!-\!1)}{a}{0}\colref{(b\!+\!1)}{(b\!+\!2)}{0}\cdots\colref{(c\!-\!1)}{c}{0}\\
		& \kern1cm\colref{c}{c\!+\!2}{0}\colref{(c\!+\!2)}{(c\!+\!3)}{0}\cdots\colref{(n\!-\!2)}{(n\!-\!1)}{0}\\
		& \kern1cm\colref{a}{(n\!-\!1)}{d-1},
	\end{align*}
	and thus this word has to correspond to $t_{1}t_{2}\cdots t_{k-1}$. However, we have for instance 
	$\colref{a}{(n\!-\!1)}{d-1}\succ_{\gamma}\colref{c}{(c\!+\!1)}{0}=t_{k}$, which contradicts the assumption that $t_{1}t_{2}\cdots t_{k}$ is increasing.\\
	(iid) Let $t_{k}=\colref{c}{(c\!+\!1)}{0}$, where $1\leq c<a$. We have
	\begin{multline*}
		w' = \bigl(\!\bigl(\colint{1}{0}\;\ldots\;\colint{c}{0}\;\colint{(c+2)}{0}\;\colint{(c\!+\!3)}{0}\;\ldots\;\colint{a}{0}\;\colint{(b\!+\!1)}{d-1}\\
			\colint{(b\!+\!2)}{d-1}\;\ldots\;\colint{(n\!-\!1)}{d-1}\bigr)\!\bigr),
	\end{multline*}
	and $\ell_{T}(w')<k$. Moreover, $w'$ is again of type (ii), so by induction hypothesis there exists a unique increasing reduced $T$-word of $w'$, 
	namely
	\begin{align*}
		w' & = \colref{1}{2}{0}\cdots\colref{c}{(c\!+\!2)}{0}\colref{(c\!+\!2)}{(c\!+\!3)}{0}\cdots\colref{(a\!-\!1)}{a}{0}\\
		& \kern1cm\colref{(b\!+\!1)}{(b\!+\!2)}{0}\colref{(b\!+\!2)}{(b\!+\!3)}{0}\cdots\colref{(n\!-\!2)}{(n\!-\!1)}{0}\\
		& \kern1cm\colref{a}{(n\!-\!1)}{d-1},
	\end{align*}
	and thus this word has to correspond to $t_{1}t_{2}\cdots t_{k-1}$. However, we have for instance 
	$\colref{a}{(n\!-\!1)}{d-1}\succ_{\gamma}\colref{c}{(c\!+\!1)}{0}=t_{k}$, which contradicts the assumption that $t_{1}t_{2}\cdots t_{k}$ is increasing.\\
	Hence the reduced $T$-word of $w$ in \eqref{eq:g11n_rising_2} is the unique increasing reduced $T$-word.
	
	\smallskip
	
	(iii) Let $w=\bigl(\!\bigl(\colint{1}{0}\;\colint{2}{0}\;\ldots\;\colint{a}{0}\;\colint{n}{s-1}\colint{(a\!+\!1)}{d-1}\;\colint{(a\!+\!2)}{d-1}\;
	\ldots\;\colint{(n\!-\!1)}{d-1}\bigr)\!\bigr)$, where $1\leq a<n$. Again consider the decomposition of $w$ according to 
	\eqref{eq:factors_g11n}:
	\begin{multline*}
		w = \colref{1}{2}{0}\colref{2}{3}{0}\cdots\colref{(a\!-\!1)}{a}{0}\colref{a}{n}{s-1}\colref{(a\!+\!1)}{n}{s}\\
			\colref{(a\!+\!1)}{(a\!+\!2)}{0}\colref{(a\!+\!2)}{(a\!+\!3)}{0}\cdots\colref{(n\!-\!2)}{(n\!-\!1)}{0}.
	\end{multline*}
	We notice that this word is not increasing with respect to \eqref{eq:order_gddn}. However, repeated left-shifting yields
	\begin{align}\label{eq:g11n_rising_3}
		w & = \colref{1}{2}{0}\colref{2}{3}{0}\cdots\colref{(a\!-\!1)}{a}{0}\colref{(a\!+\!1)}{(a\!+\!2)}{0}\\
		& \kern1cm\colref{(a\!+\!2)}{(a\!+\!3)}{0}\cdots\colref{(n\!-\!2)}{(n\!-\!1)}{0}\colref{a}{n}{s-1}\nonumber\\
		& \kern1cm\colref{(n\!-\!1)}{n}{s}\nonumber,
	\end{align}
	and this word is increasing with respect to \eqref{eq:order_gddn}. We need to show that this is the only increasing reduced $T$-word of $w$.
	Again we suppose that $w=t_{1}t_{2}\cdots t_{k}$ is an increasing reduced $T$-word of $w$ that is different from \eqref{eq:g11n_rising_3}, and
	analogously to (ii) it suffices to investigate $w'=wt_{k}$. In view of Lemma~\ref{lem:shifting} and 
	Proposition~\ref{prop:gamma_reflections} the reflection $t_{k}$ can only be of one of the following five forms. \\
	(iiia) Let $t_{k}=\colref{(n\!-\!1)}{n}{s}$. Then $t_{k}$ is the $k$-th factor in \eqref{eq:g11n_rising_3}, and we obtain a contradiction.\\
	(iiib) Let $t_{k}=\colref{c}{n}{s}$, where $a+1\leq c<n-1$. We have 
	\begin{multline*}
		w' = \bigl(\!\bigl(\colint{1}{0}\;\colint{2}{0}\;\ldots\;\colint{a}{0}\;\colint{n}{s-1}\;\colint{(c\!+\!1)}{d-1}\;
			\colint{(c\!+\!2)}{d-1}\;\ldots\;\colint{(n\!-\!1)}{d-1}\bigr)\!\bigr)\\
			\bigl(\!\bigl(\colint{(a\!+\!1)}{0}\;\colint{(a\!+\!2)}{0}\;\ldots\;\colint{c}{0}\bigr)\!\bigr).
	\end{multline*}
	Hence we can write $w'=w'_{1}w'_{2}$, where $w'_{1}$ is again of type (iii) and $w'_{2}$ is of type (i). In particular 
	$\ell_{T}(w'_{1}),\ell_{T}(w'_{2})<k$, so by induction hypothesis we can find a unique increasing reduced $T$-word of $w'$ analogously to (iib), namely
	\begin{align*}
		w' & = \colref{1}{2}{0}\colref{2}{3}{0}\cdots\colref{(a\!-\!1)}{a}{0}\colref{(a\!+\!1)}{(a\!+\!2)}{0}\cdots\\
		& \kern1cm\colref{(c\!-\!1)}{c}{0}\colref{(c\!+\!1)}{(c\!+\!2)}{0}\cdots\colref{(n\!-\!2)}{(n\!-\!1)}{0}\\
		& \kern1cm\colref{a}{n}{s-1}\colref{(n\!-\!1)}{n}{s},
	\end{align*}
	and thus this word has to correspond to $t_{1}t_{2}\cdots t_{k-1}$. However, we have for instance 
	$\colref{(n\!-\!1)}{n}{s}\succ_{\gamma}\colref{c}{n}{s}=t_{k}$, which implies that there exists no increasing reduced $T$-word of $w$ in this 
	case.\\
	(iiic) Let $t_{k}=\colref{c}{n}{s-1}$, where $a\leq c<n-1$. We have 
	\begin{multline*}
		w' = \bigl(\!\bigl(\colint{1}{0}\;\colint{2}{0}\;\ldots\;\colint{a}{0}\;\colint{n}{s-1}\;\colint{(c\!+\!1)}{0}\;
			\colint{(c\!+\!2)}{0}\;\ldots\;\colint{(n\!-\!1)}{0}\bigr)\!\bigr)\\
			\bigl(\!\bigl(\colint{(a\!+\!1)}{0}\;\colint{(a\!+\!2)}{0}\;\ldots\;\colint{c}{0}\bigr)\!\bigr).
	\end{multline*}
	Hence we can write $w'=w'_{1}w'_{2}$, where $w'_{1}$ is again of type (iii) and $w'_{2}$ is of type (i). In particular 
	$\ell_{T}(w'_{1}),\ell_{T}(w'_{2})<k$, so by induction hypothesis we can find a unique increasing reduced $T$-word of $w'$ analogously to (iiib), namely
	\begin{align*}
		w' & = \colref{1}{2}{0}\colref{2}{3}{0}\cdots\colref{(a\!-\!1)}{a}{0}\colref{(a\!+\!1)}{(a\!+\!2)}{0}\cdots\\
		& \kern1cm\colref{(c\!-\!1)}{c}{0}\colref{(c\!+\!1)}{(c\!+\!2)}{0}\cdots\colref{(n\!-\!2)}{(n\!-\!1)}{0}\\
		& \kern1cm\colref{c}{n}{s-1}\colref{(n\!-\!1)}{n}{s-1},
	\end{align*}
	and thus this word has to correspond to $t_{1}t_{2}\cdots t_{k-1}$. However, we have for instance 
	$\colref{(n\!-\!1)}{n}{s-1}\succ_{\gamma}\colref{c}{n}{s-1}=t_{k}$, which contradicts the assumption that $t_{1}t_{2}\cdots t_{k}$ is increasing.\\
	(iiid) Let $t_{k}=\colref{c}{(c\!+\!1)}{0}$, where $a+1\leq c<n-1$. We have
	\begin{multline*}
		w' = \bigl(\!\bigl(\colint{1}{0}\;\ldots\;\colint{a}{0}\,\colint{n}{s-1}\;\colint{(a\!+\!1)}{d-1}\;\ldots\;\colint{c}{d-1}\;\colint{(c\!+\!2)}{d-1}\\
			\colint{(c\!+\!3)}{d-1}\;\ldots\;\colint{(n\!-\!1)}{d-1}\bigr)\!\bigr),
	\end{multline*}
	and $\ell_{T}(w')<k$. Moreover, $w'$ is again of type (iii), so by induction hypothesis there exists a unique increasing reduced $T$-word of 
	$w'$, namely
	\begin{align*}
		w' & = \colref{1}{2}{0}\cdots\colref{(a\!-\!1)}{a}{0}\colref{(a\!+\!1)}{(a\!+\!2)}{0}\cdots\colref{(c\!-\!1)}{c}{0}\\
		& \kern1cm\colref{c}{c\!+\!2}{0}\colref{(c\!+\!2)}{(c\!+\!3)}{0}\cdots\colref{(n\!-\!2)}{(n\!-\!1)}{0}\\
		& \kern1cm\colref{a}{(n\!-\!1)}{s-1}\colref{(n\!-\!1)}{n}{s},
	\end{align*}
	and thus this word has to correspond to $t_{1}t_{2}\cdots t_{k-1}$. However, we have for instance 
	$\colref{(n\!-\!1)}{n}{s}\succ_{\gamma}\colref{c}{(c\!+\!1)}{0}=t_{k}$, which contradicts the assumption that $t_{1}t_{2}\cdots t_{k}$ is increasing.\\
	(iiie) Let $t_{k}=\colref{c}{(c\!+\!1)}{0}$, where $1\leq c<a$. We have
	\begin{displaymath}
		w' = \bigl(\!\bigl(\colint{1}{0}\;\ldots\;\colint{c}{0}\;\colint{(c+2)}{0}\;\colint{(c\!+\!3)}{0}\;\ldots\;\colint{a}{0}\;
			\colint{n}{s-1}\;\colint{(a\!+\!1)}{d-1}\;\ldots\;\colint{(n\!-\!1)}{d-1}\bigr)\!\bigr),
	\end{displaymath}
	and $\ell_{T}(w')<k$. Moreover, $w'$ is again of type (iii), so by induction hypothesis there exists a unique increasing reduced $T$-word of 
	$w'$, namely
	\begin{align*}
		w' & = \colref{1}{2}{0}\cdots\colref{(c\!-\!1)}{c}{0}\colref{c}{(c\!+\!2)}{0}\colref{(c\!+\!2)}{(c\!+\!3)}{0}\cdots\\
		& \kern1cm\colref{(a\!-\!1)}{a}{0}\colref{(a\!+\!1)}{(a\!+\!2)}{0}\cdots\colref{(n\!-\!2)}{(n\!-\!1)}{0}\\
		& \kern1cm\colref{a}{n}{s-1}\colref{(n\!-\!1)}{n}{s},
	\end{align*}
	and thus this word has to correspond to $t_{1}t_{2}\cdots t_{k-1}$. However, we have for instance 
	$\colref{(n\!-\!1)}{n}{s}\succ_{\gamma}\colref{c}{(c\!+\!1)}{0}=t_{k}$, which contradicts the assumption that $t_{1}t_{2}\cdots t_{k}$ is increasing.\\
	Hence the reduced $T$-word of $w$ in \eqref{eq:g11n_rising_3} is the unique increasing reduced $T$-word, and the proof is complete.
\end{proof}

\section{The Proof of Corollary~\ref{cor:gddn_symmetric_reducible}}
  \label{app:nc_gddn_details_3}
\begin{corollary}
	Let $w\leq_{T}\gamma$ such that the parabolic subgroup $W$ of $G(d,d,n)$, in which $w$ is a Coxeter element, is reducible, and hence 
	$W=W_{1}\times W_{2}\times\cdots\times W_{l}$ for some $l$. If for each $i\in\{1,2,\ldots,l\}$, the group $W_{i}$ is isomorphic to $G(1,1,n_{i})$ 
	for $n_{i}\leq n$, then there exists a unique increasing reduced $T$-word of $w$ with respect to the restriction of $\prec_{\gamma}$ to the 
	reflections in $T_{\gamma}\cap[\varepsilon,w]$.
\end{corollary}
\begin{proof}
	First suppose that $l=2$. In particular, we can write $w=w_{1}w_{2}$, where $w_{1}$ and $w_{2}$ commute. In view of 
	Proposition~\ref{prop:gddn_symmetric_irreducible}, each of $w_{1}$ and $w_{2}$ can be of three possible forms. Since they commute it suffices to
	consider the following cases:
	
	(i) Let $w_{1}=\bigl(\!\bigl(\colint{a}{0}\;\ldots\;\colint{b}{0}\bigr)\!\bigr)$ and 
	$w_{2}=\bigl(\!\bigl(\colint{c}{0}\;\ldots\;\colint{e}{0}\bigr)\!\bigr)$, where $a<b<e+1<d$. 
	Proposition~\ref{prop:gddn_symmetric_irreducible} implies that each of $w_{1}$ and $w_{2}$ has a unique increasing reduced $T$-word, namely
	\begin{align*}
		w_{1} & = \colref{a}{(a\!+\!1)}{0}\colref{(a\!+\!1)}{(a\!+\!2)}{0}\cdots\colref{(b\!-\!1)}{b}{0},\quad\text{and}\\
		w_{2} & = \colref{c}{(c\!+\!1)}{0}\colref{(c\!+\!1)}{(c\!+\!2)}{0}\cdots\colref{(e\!-\!1)}{e}{0},
	\end{align*}
	and the concatenation $w_{1}w_{2}$ is clearly the unique increasing reduced $T$-word of $w$. 
	
	(ii) Let $w_{1}=\bigl(\!\bigl(\colint{a}{0}\;\ldots\;\colint{b}{0}\bigr)\!\bigr)$ and 
	$w_{2}=\bigl(\!\bigl(\colint{c}{0}\;\ldots\;\colint{e}{d-1}\;\ldots\;\colint{(n\!-\!1)}{d-1}\bigr)\!\bigr)$, where $a<b<c+1<e$. Again
	Proposition~\ref{prop:gddn_symmetric_irreducible} implies that each of $w_{1}$ and $w_{2}$ has a unique increasing reduced $T$-word, namely
	\begin{align*}
		w_{1} & = \colref{a}{(a\!+\!1)}{0}\colref{(a\!+\!1)}{(a\!+\!2)}{0}\cdots\colref{(b\!-\!1)}{b}{0},\quad\text{and}\\
		w_{2} & = \colref{c}{(c\!+\!1)}{0}\colref{(c\!+\!1)}{(c\!+\!2)}{0}\cdots\colref{(e\!-\!2)}{(e\!-\!1)}{0}\\
		& \kern1cm \colref{e}{(e\!+\!1)}{0}\cdots\colref{(n\!-\!2)}{(n\!-\!1)}{0}\colref{(e\!-\!1)}{(n\!-\!1)}{d-1},
	\end{align*}
	and the concatenation $w_{1}w_{2}$ is clearly the unique increasing reduced $T$-word of $w$. 
	
	(iii) Let $w_{1}=\bigl(\!\bigl(\colint{a}{0}\;\ldots\;\colint{b}{0}\bigr)\!\bigr)$ and 
	\begin{displaymath}
		w_{2}=\bigl(\!\bigl(\colint{c}{0}\;\ldots\;\colint{e}{d-1}\;\colint{n}{s-1}\;\colint{(e\!+\!1)}{d-1}\;\ldots\;\colint{(n\!-\!1)}{d-1}\bigr)\!\bigr), 
	\end{displaymath}
	where $a<b<c+1<e$. Again Proposition~\ref{prop:gddn_symmetric_irreducible} implies that each of $w_{1}$ and $w_{2}$ has a unique increasing reduced 
	$T$-word, namely
	\begin{align*}
		w_{1} & = \colref{a}{(a\!+\!1)}{0}\colref{(a\!+\!1)}{(a\!+\!2)}{0}\cdots\colref{(b\!-\!1)}{b}{0},\quad\text{and}\\
		w_{2} & = \colref{c}{(c\!+\!1)}{0}\colref{(c\!+\!1)}{(c\!+\!2)}{0}\cdots\colref{(e\!-\!2)}{(e\!-\!1)}{0}\cdots\\
		& \kern1cm\colref{(n\!-\!2)}{(n\!-\!1)}{0}\colref{(e\!-\!1)}{(n\!-\!1)}{d-1}\colref{e}{n}{s}\colref{(n\!-\!1)}{n}{s},
	\end{align*}
	and the concatenation $w_{1}w_{2}$ is clearly the unique increasing reduced $T$-word of $w$. 

	(iv) Let $w_{1}=\bigl(\!\bigl(\colint{a}{0}\;\ldots\;\colint{b}{d-1}\;\ldots\;\colint{(c\!-\!1)}{d-1}\bigr)\!\bigr)$ and 
	\begin{displaymath}
		w_{2}=\bigl(\!\bigl(\colint{c}{0}\;\ldots\;\colint{e}{d-1}\;\ldots\;\colint{(n\!-\!1)}{d-1}\bigr)\!\bigr),
	\end{displaymath}
	where $a<b<c+1<e$. Again Proposition~\ref{prop:gddn_symmetric_irreducible} implies that each of $w_{1}$ and $w_{2}$ has a unique increasing reduced 
	$T$-word, namely 
	\begin{align*}
		w_{1} & = \colref{a}{(a\!+\!1)}{0}\colref{(a\!+\!1)}{(a\!+\!2)}{0}\cdots\colref{(b\!-\!2)}{(b\!-\!1)}{0}\\
		& \kern1cm \colref{b}{(b\!+\!1)}{0}\cdots\colref{(c\!-\!2)}{(c\!-\!1)}{0}\colref{(b\!-\!1)}{(c\!-\!1)}{d-1},\quad\text{and}\\
		w_{2} & = \colref{c}{(c\!+\!1)}{0}\colref{(c\!+\!1)}{(c\!+\!2)}{0}\cdots\colref{(e\!-\!2)}{(e\!-\!1)}{0}\\
		& \kern1cm \colref{e}{(e\!+\!1)}{0}\cdots\colref{(n\!-\!2)}{(n\!-\!1)}{0}\colref{(e\!-\!1)}{(n\!-\!1)}{d-1}.
	\end{align*}
	Since $w_{1}$ and $w_{2}$ commute, it is easy to see that there is a unique increasing reduced $T$-word of $w$, namely
	\begin{align*}
		w & = \colref{a}{(a\!+\!1)}{0}\cdots\colref{(b\!-\!2)}{(b\!-\!1)}{0}\colref{b}{(b\!+\!1)}{0}\cdots\\
		& \kern1cm\colref{(c\!-\!2)}{(c\!-\!1)}{0}\colref{c}{(c\!+\!1)}{0}\cdots\colref{(e\!-\!2)}{(e\!-\!1)}{0}\\
		& \kern1cm\colref{e}{(e\!+\!1)}{0}\cdots\colref{(n\!-\!2)}{(n\!-\!1)}{0}\colref{(b\!-\!1)}{(c\!-\!1)}{d-1}\\
		& \kern1cm\colref{(e\!-\!1)}{(n\!-\!1)}{d-1}.
	\end{align*}
	
	(v) Let $w_{1}=\bigl(\!\bigl(\colint{a}{0}\;\ldots\;\colint{b}{d-1}\;\ldots\;\colint{(c\!-\!1)}{d-1}\bigr)\!\bigr)$ and 
	\begin{displaymath}
		w_{2}=\bigl(\!\bigl(\colint{c}{0}\;\ldots\;\colint{e}{d-1}\;\colint{n}{s-1}\;\colint{(e\!+\!1)}{d-1}\;\ldots\;\colint{(n\!-\!1)}{d-1}\bigr)\!\bigr),
	\end{displaymath} 
	where $a<b<c+1<e$. Again Proposition~\ref{prop:gddn_symmetric_irreducible} implies that each of $w_{1}$ and $w_{2}$ has a unique increasing reduced 
	$T$-word, namely
	\begin{align*}
		w_{1} & = \colref{a}{(a\!+\!1)}{0}\colref{(a\!+\!1)}{(a\!+\!2)}{0}\cdots\colref{(b\!-\!2)}{(b\!-\!1)}{0}\\
		& \kern1cm \colref{b}{(b\!+\!1)}{0}\cdots\colref{(c\!-\!2)}{(c\!-\!1)}{0}\colref{(b\!-\!1)}{(c\!-\!1)}{d-1},\quad\text{and}\\
		w_{2} & = \colref{c}{(c\!+\!1)}{0}\colref{(c\!+\!1)}{(c\!+\!2)}{0}\cdots\colref{(e\!-\!2)}{(e\!-\!1)}{0}\cdots\\
		& \kern1cm\colref{(n\!-\!2)}{(n\!-\!1)}{0}\colref{(e\!-\!1)}{(n\!-\!1)}{d-1}\colref{e}{n}{s}\colref{(n\!-\!1)}{n}{s}.
	\end{align*}
	Since $w_{1}$ and $w_{2}$ commute, it is easy to see that there is a unique increasing reduced $T$-word of $w$, namely
	\begin{align*}
		w & = \colref{a}{(a\!+\!1)}{0}\cdots\colref{(b\!-\!2)}{(b\!-\!1)}{0}\colref{b}{(b\!+\!1)}{0}\cdots\\
		& \kern1cm\colref{(c\!-\!2)}{(c\!-\!1)}{0}\colref{c}{(c\!+\!1)}{0}\cdots\colref{(e\!-\!2)}{(e\!-\!1)}{0}\cdots\\
		& \kern1cm\colref{(n\!-\!2)}{(n\!-\!1)}{0}\colref{(b\!-\!1)}{(c\!-\!1)}{d-1}\colref{(e\!-\!1)}{(n\!-\!1)}{d-1}\\
		& \kern1cm\colref{e}{n}{s}\colref{(n\!-\!1)}{n}{s}.
	\end{align*}

	The case that both $w_{1}$ and $w_{2}$ are of type (iii) in Proposition~\ref{prop:gddn_symmetric_irreducible} cannot occur, since in this case
	$w_{1}$ and $w_{2}$ would not commute. The proof for $l>2$ works analogously. 
\end{proof}

\bibliography{literature}

\end{document}